\newtheorem{thm}{Theorem}[section]
\newtheorem{prop}[thm]{Proposition}
\newtheorem{cor}[thm]{Corollary}
\theoremstyle{definition}
\theoremstyle{remark}
\newtheorem{rem}{Remark}[section]
\newtheorem{defn}{Definition}
\numberwithin{equation}{section}
\def\d{\mathrm d}
\begin{document}

\title[Wave equations for the fractional S-L operator: singularities]{Wave equations for the fractional Sturm-Liouville operator with singular coefficients}

\author[M. Ruzhansky]{Michael Ruzhansky}
\address{
  Michael Ruzhansky:
  \endgraf
  Department of Mathematics: Analysis, Logic and Discrete Mathematics
  \endgraf
  Ghent University, Krijgslaan 281, Building S8, B 9000 Ghent
  \endgraf
  Belgium
  \endgraf
  and
  \endgraf
  School of Mathematical Sciences
  \endgraf
  Queen Mary University of London
  \endgraf
  United Kingdom
  \endgraf
  {\it E-mail address} {\rm michael.ruzhansky@ugent.be}
}

\author[M. Sebih]{Mohammed Elamine Sebih}
\address{
  Mohammed Elamine Sebih:
  \endgraf
  Laboratory of Geomatics, Ecology and Environment (LGEO2E)
  \endgraf
  University Mustapha Stambouli of Mascara, 29000 Mascara
  \endgraf
  Algeria
  \endgraf
  {\it E-mail address} {\rm sebihmed@gmail.com, ma.sebih@univ-mascara.dz}
}

\author[A. Yeskermessuly ]{Alibek Yeskermessuly}
\address{
  Alibek Yeskermessuly:
  \endgraf   
  Faculty of Natural Sciences and Informatization
  \endgraf
  Altynsarin Arkalyk Pedagogical Institute, Auelbekov, 17, 110300 Arkalyk, Kazakhstan
  \endgraf  
  {\it E-mail address:} {\rm alibek.yeskermessuly@gmail.com}
  }

\thanks{This research was funded by the FWO Odysseus 1 grant G.0H94.18N: Analysis and Partial Differential Equations, and by the Methusalem programme of the Ghent University Special Research Fund (BOF) (Grant number 01M01021). MR is also supported by EPSRC grants EP/R003025/2 and EP/V005529/1.}

\keywords{wave equation, fractional Sturm-Liouville operator, initial/boundary problem, weak solution, energy methods, separation of variables, position dependent coefficients, singular coefficients, regularisation, very weak solution.}
\subjclass[2020]{34B24, 35D30, 35L05, 35L20, 35L81.}

\begin{abstract}
In this paper, we consider the wave equation for the fractional Sturm-Liouville operator with lower order terms and singular coefficients and data. We prove that the problem has a very weak solution. Furthermore, we prove the uniqueness in an appropriate sense and the consistency of the very weak solution concept with the classical theory.
\end{abstract}

\maketitle


\section{Introduction}
In the present paper, our investigation is devoted to the wave equation generated by the fractional Sturm-Liouville operator involving lower order terms and singularities in the coefficients and the data. That is, for $s\geq 0$ and $T>0$, we study the equation
\begin{equation}\label{Equation}
    \partial_{t}^{2}u(t,x) + \mathcal{L}^su(t,x) + a(x)u(t,x) + b(x)u_{t}(t,x) =0, \quad (t,x)\in [0,T]\times (0,1),
\end{equation}
subject to the initial conditions
\begin{equation}\label{Initial conditions}
    u(0,x)=u_{0}(x), \quad u_{t}(0,x)=u_{1}(x), \quad x\in (0,1),
\end{equation}
and boundary conditions
\begin{equation}\label{boundary conditions}
    u(t,0)=u(t,1)=0, \quad t\in [0,T],
\end{equation}
where $a$, $b$ are assumed to be non-negative and $\mathcal{L}^s$ is the fractional differential operator associated to the Sturm-Liouville operator defined by
\begin{equation}\label{S-L operator}
    \mathcal{L}u(t,x):=-\partial_{x}^{2}u(t,x) + q(x)u(t,x),
\end{equation}
for a real valued function $q$.

The Sturm-Liouville operator with singular potential was studied by Savchuk and Shkalikov in \cite{SS99}. In this work, asymptotic estimates for the eigenvalues and corresponding eigenfunctions, when the operator includes singular potential were obtained. We also cite \cite{NS99}, \cite{Sav01}, \cite{SS06} and \cite{SV15} where the Sturm-Liouville operator with distributional potentials was explored.

So, our aim in the present work is to study the well-posedness of the initial/boundary problem \eqref{Equation}-\eqref{boundary conditions}, where the spatially dependent coefficients $a,b$ and $q$ and the initial data $u_0$ and $u_1$ are allowed to be non-regular functions having in mind the Dirac delta function and its powers. We do this study under the framework of the concept of very weak solutions. Our reasons to get into this framework lies in the fact that when the equation under consideration contains products of distributional terms, it is no longer possible to pose the problem in the distributional framework. This is related to the well known work of Schwartz \cite{Sch54} about the impossibility of multiplication of distributions.

In order to give a neat solution to this problem, the concept of very weak solutions was introduced in \cite{GR15} for the analysis of second order hyperbolic equations with singular coefficients. Later on, this concept of solutions has been developed for a number of problems. We cite for instance \cite{RT17a}, \cite{RT17b}, \cite{MRT19}, \cite{ART19}, \cite{ARST21a}, \cite{ARST21b}, \cite{ARST21c}, \cite{CRT21}, \cite{CRT22a}, \cite{CRT22b}, \cite{SW22} and \cite{CDRT23} to mention only few. In \cite{ARST21a}, \cite{ARST21b}, \cite{ARST21c}, \cite{CRT21}, \cite{CRT22a} and \cite{CRT22b}, arguments were based on energy methods. In the recent works \cite{RSY22}, \cite{RY22} and \cite{RY23}, the existence of solutions to initial/boundary value problems for the Sturm-Liouville operator including various types of time-dependent singular coefficients was considered. In these works, separation of variables techniques \cite{GSS17} were possible in order to obtain explicit formulas to the classical solutions. Our aim in the present paper is to combine separation of variables techniques with energy methods in order to extend the results obtained in \cite{RSY22} and \cite{RY22}, firstly by considering the fractional Sturm-Liouville operator instead of the classical one, and secondly, by including more terms in the equation under consideration. Most importantly, we allow coefficients to depend on space, so that the previous separation of variables methods do not readily apply.

The paper is organised as follows. After some preliminaries about the classical and the fractional Sturm-Liouville operator and about Duhamel's principle, we establish in Section \ref{Classical case}, energy estimates in the regular case, which are key in proving existence and uniqueness of very weak solutions. We treat two cases. The general case when $s\geq 0$ and the case $s=1$. In Section \ref{VW well-posed}, we introduce the notion of very weak solutions adapted to our considered problem \eqref{Equation}-\eqref{boundary conditions} and we prove that it is very weakly well-posed. Section \ref{Consistency} is devoted to showing the consistency of the concept of very weak solutions with the classical theory.

\section{Preliminaries}\label{Prelim}
The following notations and notions will be frequently used throughout this paper.
\subsection{Notation}
\begin{itemize}
    \item By the notation $f\lesssim g$, we mean that there exists a positive constant $C$, such that $f \leq Cg$.
    \item We also define
    \begin{equation*}
        \Vert u(t,\cdot)\Vert_{s} := \Vert u(t,\cdot)\Vert_{L^2} + \Vert \mathcal{L}^{\frac{s}{2}}u(t,\cdot)\Vert_{L^2} + \Vert u_t(t,\cdot)\Vert_{L^2}.
    \end{equation*}
\end{itemize}
\subsection{Classical Sturm-Liouville operator}
Here we present some spectral properties of the Sturm-Liouville operator obtained in \cite{SS99}. We consider the Sturm-Liouville operator $\mathcal{L}$ generated in the interval (0,1) by the differential expression
\begin{equation}\label{St-L}
    \mathcal{L}y:=-\frac{d^2}{dx^2}y+q(x)y,
\end{equation}
with boundary conditions
\begin{equation}\label{Dirihle}
    y(0)=y(1)=0. 
\end{equation}
We first consider the real potential $q$ satisfying
\begin{equation}\label{con-q}
    q(x)=\nu'(x),\text{~such that~} \nu\in L^2(0,1).
\end{equation}
The eigenvalue problem $\mathcal{L}y=\lambda y$ with Dirichlet boundary conditions \eqref{Dirihle} has eigenvalues 
\begin{equation}\label{e-val}
    \lambda_n=(\pi n)^2(1+o(n^{-1})),\qquad n=1,2,...,
\end{equation}
and corresponding eigenfunctions
\begin{equation}\label{sol-SL}
    \Tilde{\phi}_n(x)=r_n(x)\sin\theta_n(x)=r_n(x)\sin(\sqrt{\lambda_n}x +\eta_n(x)), 
\end{equation}
where
$$r_n(x)=\exp{\left(-\int\limits_0^x \nu(s)\cos2\theta_n(s)ds+o(1)\right)}=1+o(1)\quad \text{as }n\to \infty,$$
$$\eta_n(x)=o(1), \quad n=1,2,...$$

Since $\lambda_n$ are real and according to \eqref{con-q}, $\nu$ is a real valued function. Then, the eigenfunctions $\Tilde{\phi}_n$ are real.\\
The first derivatives of $\Tilde{\phi_n}$ have the following form
\begin{equation}\label{phi-der}
    \Tilde{\phi}'_n(x)=\sqrt{\lambda_n}r_n(x)\cos(\theta_n(x))+\nu(x)\Tilde{\phi}_n(x).
\end{equation}
By Theorem 2 in \cite{Sav01} we have that
\begin{equation}\label{phi-sav}
\Tilde{\phi}_n(x)=\sin{\sqrt{\lambda_n}x}+\psi_n(x), \quad n=1,2,...,\quad \sum\limits_{n=1}^\infty \|\psi_n\|^2\leq C \int\limits_0^1|\nu(x)|^2dx.
\end{equation}
The estimate for $\|\Tilde{\phi}_n\|_{L^2}$ follows by taking the $L^2$ norm in \eqref{sol-SL} and by proceeding as follows
\begin{eqnarray}\label{est-high}
\|\Tilde{\phi}_n\|^2_{L^2}&=&\int\limits_0^1\left|r_n(x)\sin\left(\lambda_n^{\frac{1}{2}}x+\eta_n(x)\right)\right|^2dx\leq \int\limits_0^1\left|r_n(x)\right|^2dx\nonumber\\
&\leq& \int\limits_0^1\left|\exp{\left(-\int\limits_0^x \nu(s)\cos{2\theta_n(s)}ds-\frac{1}{2}\frac{1}{\sqrt{\lambda_n}}\int\limits_0^x\nu^2(s)\sin{2\theta_n(s)}ds\right)}\right|^2dx\nonumber\\
&\lesssim& \int\limits_0^1\exp{\left(2\int\limits_0^x|\nu(s)|ds+\frac{1}{\sqrt{\lambda_n}}\int\limits_0^x|\nu^2(s)|ds\right)}dx\nonumber\\
&\lesssim& \exp{2\left(\|\nu\|_{L^2}+\lambda_n^{-\frac{1}{2}}\|\nu\|^2_{L^2}\right)}<\infty.
\end{eqnarray}
In addition, according to Theorem 4 in \cite{SS99}, we get
\begin{equation}\label{est_low}
  \Tilde{\phi}_n(x)=\sin(\pi nx)+o(1),  
\end{equation}
for sufficiently large $n$. Combining this with \eqref{sol-SL}, we see that there exist a constant $C_0>0$, such that
\begin{equation}\label{low-est}
0<C_0\leq \|\Tilde{\phi}_n\|_{L^2}<\infty \quad \text{for all } n.
\end{equation}
The family of eigenfunctions of the operator $\mathcal{L}$ form an orthogonal basis in $L^2(0,1)$. Moreover, we will normalize them and denote
\begin{equation}\label{norm-phi}
  \phi_n(x)=\frac{\Tilde{\phi}_n(x)}{\sqrt{\langle \Tilde{\phi}_n,\Tilde{\phi}_n}\rangle}=\frac{\Tilde{\phi}_n(x)}{\|\Tilde{\phi}_n\|_{L^2}}.  
\end{equation}

\subsection{Fractional Sturm-Liouville operator}
\begin{defn}\label{S.T fractional}
    Let $\{\lambda_k ,\phi_k\}_{k=1,\cdots, \infty}$ be the family of eigenvalues and corresponding eigenfunctions to the classical Sturm-Liouville operator as defined above. Then, for $s\in\mathbb R$, $\mathcal{L}^s$ is defined in the sense that:
    \begin{equation}
        \mathcal{L}^s \phi_k := \lambda^s \phi_k,
    \end{equation}
    for all $k=1,\cdots,$.
\end{defn}
In other words, $\mathcal{L}^s$ is defined to be the operator having the family $\{\lambda_k^s ,\phi_k\}_{k=1,\cdots,}$ as family of eigenvalues and corresponding eigenfunctions.

\begin{prop}\label{prop1}
Let $\mathcal{L}$ be the Sturm-Liouville operator generated in the interval (0,1) by the differential expression \eqref{St-L}
with boundary conditions \eqref{Dirihle}. Assume that $(f, g) \in L^2(0,1)\times L^2(0,1)$ with $(\mathcal{L}^s f,\mathcal{L}^s g) \in L^2(0,1)\times L^2(0,1)$. Then
    \begin{equation}\label{L^s}
        \langle \mathcal{L}^sf,g\rangle_{L^2}=\langle f,\mathcal{L}^sg\rangle_{L^2} \text{~for any~} s\in \mathbb R,
    \end{equation}
    and
    \begin{equation}\label{L^s+s'}
        \mathcal{L}^{s+s'}f=\mathcal{L}^s \left(\mathcal{L}^{s'}f\right) \text{~for~} s,s'\in \mathbb R.
    \end{equation}
 
\end{prop}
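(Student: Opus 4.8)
The plan is to reduce both identities to elementary manipulations of Fourier series with respect to the orthonormal basis $\{\phi_k\}_{k\geq 1}$ of $L^2(0,1)$ formed by the normalised eigenfunctions of $\mathcal{L}$. Write $f=\sum_{k\geq 1}\widehat f_k\phi_k$ and $g=\sum_{k\geq 1}\widehat g_k\phi_k$ with $\widehat f_k=\langle f,\phi_k\rangle_{L^2}$ and $\widehat g_k=\langle g,\phi_k\rangle_{L^2}$, so that $\sum_k|\widehat f_k|^2=\|f\|_{L^2}^2$ and $\sum_k|\widehat g_k|^2=\|g\|_{L^2}^2$ by Parseval. From Definition \ref{S.T fractional}, extended term by term to $L^2(0,1)$ (which is precisely what makes $\mathcal{L}^s$ a well-defined closed operator), we have $\mathcal{L}^s f=\sum_k\lambda_k^s\widehat f_k\phi_k$, and the hypothesis $\mathcal{L}^s f\in L^2(0,1)$ means exactly $\sum_k\lambda_k^{2s}|\widehat f_k|^2=\|\mathcal{L}^s f\|_{L^2}^2<\infty$; likewise for $g$. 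Throughout I use that the eigenvalues $\lambda_k$ are real and positive, by \eqref{e-val} for large $k$ (and by assumption for the remaining finitely many indices), so that $\lambda_k^s\in\mathbb{R}$ for every $s\in\mathbb{R}$.

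To prove \eqref{L^s}, note that $(\lambda_k^s\widehat f_k)_k\in\ell^2$ and $(\widehat g_k)_k\in\ell^2$, hence the series $\sum_k\lambda_k^s\widehat f_k\overline{\widehat g_k}$ converges absolutely with sum bounded by $\|\mathcal{L}^s f\|_{L^2}\|g\|_{L^2}$ by Cauchy--Schwarz. Expanding the two inner products through the orthonormality of $\{\phi_k\}$ gives
\begin{equation*}
\langle\mathcal{L}^s f,g\rangle_{L^2}=\sum_{k\geq 1}\lambda_k^s\widehat f_k\overline{\widehat g_k}=\sum_{k\geq 1}\widehat f_k\,\overline{\lambda_k^s\widehat g_k}=\langle f,\mathcal{L}^s g\rangle_{L^2},
\end{equation*}
the middle equality being valid because $\lambda_k^s\in\mathbb{R}$. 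This is \eqref{L^s}.

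For \eqref{L^s+s'}, apply Definition \ref{S.T fractional} twice: $\mathcal{L}^{s'}f=\sum_k\lambda_k^{s'}\widehat f_k\phi_k$, and then, on the understanding that the intermediate and final series lie in $L^2(0,1)$ (i.e. $\sum_k\lambda_k^{2s'}|\widehat f_k|^2<\infty$ and $\sum_k\lambda_k^{2(s+s')}|\widehat f_k|^2<\infty$), term-by-term application of $\mathcal{L}^s$ yields
\begin{equation*}
\mathcal{L}^s\bigl(\mathcal{L}^{s'}f\bigr)=\sum_{k\geq 1}\lambda_k^s\bigl(\lambda_k^{s'}\widehat f_k\bigr)\phi_k=\sum_{k\geq 1}\lambda_k^{s+s'}\widehat f_k\phi_k=\mathcal{L}^{s+s'}f,
\end{equation*}
since $\lambda_k^s\lambda_k^{s'}=\lambda_k^{s+s'}$ for $\lambda_k>0$.

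The only point requiring genuine care --- and the one I would isolate first --- is the legitimacy of acting with $\mathcal{L}^s$ term by term on a Fourier series, that is, the identity $\mathcal{L}^s\bigl(\sum_k c_k\phi_k\bigr)=\sum_k\lambda_k^s c_k\phi_k$ whenever both sides belong to $L^2(0,1)$. This is the content of the self-adjoint functional calculus attached to $\mathcal{L}$ and should be recorded explicitly (or incorporated into Definition \ref{S.T fractional}); once it is available, both \eqref{L^s} and \eqref{L^s+s'} follow from the $\ell^2$ bookkeeping above, and no further analytic input is needed.
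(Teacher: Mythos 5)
Your proof is correct and takes essentially the same route as the paper: expand $f$ and $g$ in the orthonormal eigenbasis $\{\phi_k\}$, use that the eigenvalues $\lambda_k^s$ are real, and identify both sides of \eqref{L^s} with $\sum_k \lambda_k^s \widehat f_k\,\overline{\widehat g_k}$, while \eqref{L^s+s'} follows by applying the definition of $\mathcal{L}^s$ termwise. Your version is in fact slightly more careful than the paper's (Cauchy--Schwarz in $\ell^2$ and the explicit remark about termwise application of $\mathcal{L}^s$, where the paper simply interchanges sums and integrals), but the underlying argument is the same.
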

\begin{proof}
Since for ${n=1,2,\cdots,}$ the eigenfunctions $\phi_n$ of the Sturm-Liouville operator are orthonormal in $L^2(0,1)$ and using the fact that the operator $\mathcal{L}$ is self-adjoint (\cite{KS79}) and using eigenfunction expansions for $f,\, g \in L^2(0,1)$, we obtain
\begin{eqnarray}\label{L^s1}
\langle \mathcal{L}^sf,g\rangle_{L^2}&=&\int\limits_0^1 \sum\limits_{n=1}^\infty\lambda_n^s f_n\phi_n(x) \sum\limits_{m=1}^\infty g_m\phi_m(x)dx=\sum\limits_{n=1}^\infty\sum\limits_{m=1}^\infty\int\limits_0^1 \lambda_n^s f_n\phi_n(x) g_m\phi_m(x)dx\nonumber \\
 &=&\sum\limits_{n=1}^\infty  \lambda_n^sf_n g_n\int\limits_0^1\phi^2_n(x)dx=\sum\limits_{n=1}^\infty \lambda_n^s f_n g_n,    
\end{eqnarray}
where
$$f_n=\int\limits_0^1f(x)\phi_n(x)dx, \quad g_n=\int\limits_0^1g(x)\phi_n(x)dx.$$
On the other hand, we similarly get
\begin{equation}\label{L^s2}
    \langle f, \mathcal{L}^s g\rangle_{L^2} = \sum\limits_{n=1}^\infty f_n \lambda_n^s g_n=\sum\limits_{n=1}^\infty \lambda_n^s f_n  g_n.
\end{equation}
This proves the first statement. For \eqref{L^s+s'}, the second statement of the proposition, we have
\begin{eqnarray*}
\mathcal{L}^{s+s'}f&=&\sum\limits_{n=1}^\infty \mathcal{L}^{s+s'}f_n\phi_n(x)=\sum\limits_{n=1}^\infty\lambda_n^{s}\left(\lambda_n^{s'}f_n\phi_n(x)\right)\\
&=&\sum\limits_{n=1}^\infty\mathcal{L}^{s}\left(\mathcal{L}^{s'}f_n\phi_n(x)\right)=\mathcal{L}^s\left(\mathcal{L}^{s'}f\right),  
\end{eqnarray*}
completing the proof.
\end{proof}

\subsection{Sobolev spaces and embeddings}
We define the Sobolev spaces $W^s_\mathcal{L}$ associated to $\mathcal{L}^s$, for any $s \in \mathbb{R}$, as the space
\begin{equation}\label{Sobolev space def}
    W^s_\mathcal{L}(0,1):=\left\{f\in \mathcal{D}'_\mathcal{L}(0,1):\,\mathcal{L}^{s/2}f\in L^2(0,1)\right\},
\end{equation}
with the norm $\|f\|_{W^s_\mathcal{L}}:=\|\mathcal{L}^{s/2}f\|_{L^2}$. The global space of distributions $\mathcal{D}'_\mathcal{L}(0,1)$ is defined as follows.

The space $C^\infty_\mathcal{L}(0,1):=\mathrm{Dom}(\mathcal{L}^\infty)$ is called the space of test functions for $\mathcal{L}$, where we define 
$$\mathrm{Dom}(\mathcal{L}^\infty):=\bigcap\limits_{m=1}^\infty \mathrm{Dom}(\mathcal{L}^m),$$
where $\mathrm{Dom}(\mathcal{L}^m)$ is the domain of the operator $\mathcal{L}^m$, in turn defined as
$$\mathrm{Dom}(\mathcal{L}^m):=\left\{f\in L^2(0,1): \mathcal{L}^j f\in \mathrm{Dom}(\mathcal{L}),\,\, j=0,1,2,...,m-1\right\}.$$
The Fréchet topology of $C^\infty_\mathcal{L}(0,1)$ is given by the family of norms 
\begin{equation}\label{frechet}
    \|\phi\|_{C^m_\mathcal{L}}:=\max\limits_{j\leq m}\|\mathcal{L}^j\phi\|_{L^2(0,1)},\quad m\in \mathbb{N}_0,\,\, \phi\in C^\infty_\mathcal{L}(0,1).
\end{equation}
The space of $\mathcal{L}$-distributions
$$\mathcal{D}'_\mathcal{L}(0,1):=\mathbf{L}\left(C^\infty_\mathcal{L}(0,1),\mathbb{C}\right)$$
is the space of all linear continuous functionals on $C^\infty_\mathcal{L}(0,1)$. For $\omega \in \mathcal{D}'_\mathcal{L}(0,1)$ and $\phi\in C^\infty_\mathcal{L}(0,1)$, we shall write 
$$\omega(\phi)=\langle \omega, \phi\rangle.$$
For any $\psi \in C^\infty_\mathcal{L}(0,1)$, the functional 
$$C^\infty_\mathcal{L}(0,1)\ni \phi \mapsto \int\limits_0^1 \psi(x)\phi(x)dx$$
is an $\mathcal{L}$-distribution, which gives an embedding $\psi \in C^\infty_\mathcal{L}(0,1)\hookrightarrow \mathcal{D}'_\mathcal{L}(0,1)$.


\begin{prop}[Sobolev embeddings]\label{Prop w^-s} 
Let $0<s\in \mathbb{R}$ and $f\in W^s_{\mathcal{L}}(0,1)$ satisfying the boundary conditions $f(0)=f(1)=0$. Then, we have the continuous inclusions 
\begin{equation}\label{Sobolev embeddings}
    W_{\mathcal{L}}^{s}(0,1) \subset L^2 (0,1)\subset W_{\mathcal{L}}^{-s}(0,1).
\end{equation}
That is, for any $f\in W_{\mathcal{L}}^{s}(0,1)$, we have $f\in L^2 (0,1)$ and accordingly $f\in W_{\mathcal{L}}^{-s}(0,1)$. Moreover, there exist positive constants $C_1$, $C_2$ independent of $f$ such that
\begin{equation}\label{Sobolev estimates 1}
    \Vert f\Vert_{W_{\mathcal{L}}^{-s}} \leq C_1 \Vert f\Vert_{L^2},
\end{equation}
and
\begin{equation}\label{Sobolev estimates 2}
    \Vert f\Vert_{L^2} \leq C_2 \Vert f\Vert_{W_{\mathcal{L}}^{s}}.
\end{equation}
\end{prop}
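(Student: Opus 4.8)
The plan is to pass to the eigenfunction expansion of $f$ with respect to the orthonormal basis $\{\phi_n\}_{n\geq 1}$ and to reduce all three norms in the statement to weighted $\ell^2$-norms of the coefficient sequence. Writing $f_n=\langle f,\phi_n\rangle_{L^2}$, Parseval's identity gives $\|f\|_{L^2}^2=\sum_{n=1}^\infty|f_n|^2$, while Definition \ref{S.T fractional} together with orthonormality yields, for every real $\sigma$,
\begin{equation*}
\|\mathcal{L}^{\sigma/2}f\|_{L^2}^2=\sum_{n=1}^\infty\lambda_n^{\sigma}|f_n|^2 .
\end{equation*}
In particular $\|f\|_{W^s_{\mathcal{L}}}^2=\sum_n\lambda_n^{s}|f_n|^2$ and $\|f\|_{W^{-s}_{\mathcal{L}}}^2=\sum_n\lambda_n^{-s}|f_n|^2$; the hypothesis $f(0)=f(1)=0$ ensures that $f$ genuinely sits in the scale generated by the Dirichlet eigenfunctions, so these identities are legitimate. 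Thus the proposition reduces to comparing the sequences $(f_n)$, $(\lambda_n^{s/2}f_n)$ and $(\lambda_n^{-s/2}f_n)$ in $\ell^2$.

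The only quantitative input needed is a positive lower bound for the spectrum. By the asymptotics \eqref{e-val} one has $\lambda_n=(\pi n)^2(1+o(n^{-1}))\to+\infty$, so there is an index $N$ with $\lambda_n\geq 1$ for all $n\geq N$; the finitely many remaining eigenvalues $\lambda_1,\dots,\lambda_{N-1}$ are positive, and hence
\begin{equation*}
c_0:=\inf_{n\geq 1}\lambda_n=\min_{n\geq 1}\lambda_n>0 .
\end{equation*}
Since $s>0$, this gives $0<\lambda_n^{-s}\leq c_0^{-s}$ for every $n$.

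With this in hand the estimates are immediate. From $\lambda_n^{-s}\leq c_0^{-s}$ we obtain
\begin{equation*}
\|f\|_{W^{-s}_{\mathcal{L}}}^2=\sum_{n=1}^\infty\lambda_n^{-s}|f_n|^2\leq c_0^{-s}\sum_{n=1}^\infty|f_n|^2=c_0^{-s}\|f\|_{L^2}^2 ,
\end{equation*}
which is \eqref{Sobolev estimates 1} with $C_1=c_0^{-s/2}$ and shows $L^2(0,1)\subset W^{-s}_{\mathcal{L}}(0,1)$. Writing $|f_n|^2=\lambda_n^{-s}\lambda_n^{s}|f_n|^2\leq c_0^{-s}\lambda_n^{s}|f_n|^2$ and summing gives
\begin{equation*}
\|f\|_{L^2}^2\leq c_0^{-s}\sum_{n=1}^\infty\lambda_n^{s}|f_n|^2=c_0^{-s}\|f\|_{W^s_{\mathcal{L}}}^2 ,
\end{equation*}
which is \eqref{Sobolev estimates 2} with $C_2=c_0^{-s/2}$ and shows $W^s_{\mathcal{L}}(0,1)\subset L^2(0,1)$. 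Concatenating the two inclusions yields \eqref{Sobolev embeddings}.

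I expect the only delicate point to be the positivity of the lowest eigenvalue: the asymptotics \eqref{e-val} control only the tail of the spectrum, so one must invoke — as is already implicit in the definition of $\mathcal{L}^s$ for real $s$ — that the finitely many low eigenvalues are strictly positive. If some $\lambda_n\leq 0$ occurred, one would first replace $\mathcal{L}$ by $\mathcal{L}+\kappa$ for a suitable constant $\kappa>0$, which changes none of the spaces involved. Everything else is routine bookkeeping with Parseval's identity.
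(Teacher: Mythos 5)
Your proof is correct and follows essentially the same route as the paper: expand $f$ in the Dirichlet eigenbasis, use Parseval to turn each norm into a weighted $\ell^2$ sum, and bound $\lambda_n^{-s}$ by a constant. The only difference is one of care rather than method: the paper simply asserts from \eqref{e-val} that all eigenvalues lie outside the unit ball (and dismisses the inclusion $W^s_{\mathcal{L}}\subset L^2$ as immediate from the definition), whereas you correctly observe that the asymptotics control only the tail of the spectrum and supply the uniform positive lower bound $c_0=\min_n\lambda_n>0$ explicitly, which also yields \eqref{Sobolev estimates 2} with an explicit constant.
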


\begin{proof}
The first embedding is a direct consequence of the definition of $W_{\mathcal{L}}^{s}(0,1)$ (see \eqref{Sobolev space def}). Let us prove the second statement. According to \eqref{e-val}, the eigenvalues of the operator $\mathcal{L}$ are outside the unit ball, then 
$$\lambda_n^{-\frac{s}{2}}\leq 1$$
for all $n=1,2,\cdots,$. This leads to the following estimate
\begin{eqnarray*}
    \|f\|_{W^{-s}_{\mathcal{L}}}^2&=&\|\mathcal{L}^{-\frac{s}{2}}f\|_{L^2}^2=\int\limits_0^1\left|\sum\limits_{n=1}^\infty\mathcal{L}^{-\frac{s}{2}}f_{n}\phi_n(x)\right|^2dx=\int\limits_0^1\left|\sum\limits_{n=1}^\infty\lambda_n^{-\frac{s}{2}}f_{n}\phi_n(x)\right|^2dx\\
&\lesssim&\sum\limits_{n=1}^\infty\int\limits_0^1\left|\lambda_n^{-\frac{s}{2}}f_{n}\phi_n(x)\right|^2dx=\sum\limits_{n=1}^\infty\left|\lambda_n^{-\frac{s}{2}}f_{n}\right|^2 \leq \sum\limits_{n=1}^\infty |f_{n}|^2=\|f\|_{L^2}^2,
\end{eqnarray*}
completing the proof.
\end{proof}

\subsection{Duhamel's principle}
Throughout this paper, we will often use the following version of Duhamel's principle for which the proof is given. For more details and applications about Duhamel's principle, we refer the reader to \cite{ER18}. Let us consider the following initial/boundary problem,
\begin{equation}\label{Equation Duhamel}
    \left\lbrace
    \begin{array}{l}
    u_{tt}(t,x)+Lu(t,x)+\alpha(x)u_{t}(t,x)=f(t,x) ,~~~(t,x)\in\left(0,\infty\right)\times (0,1),\\
    u(0,x)=u_{0}(x),\,\,\, u_{t}(0,x)=u_{1}(x),\,\,\, x\in(0,1),\\
    u(t,0)=u(t,1)=0,\,\,\,t\in (0,\infty),
    \end{array}
    \right.
\end{equation}
for a given function $\alpha$ and $L$ is a linear partial differential operator acting over the spatial variable $x$.

\begin{prop}\label{Prop Duhamel}
The solution to the initial/boundary problem (\ref{Equation Duhamel}) is given by
\begin{equation}\label{Sol Duhamel}
    u(t,x)= w(t,x) + \int_0^t v(t,x;\tau)\d \tau,
\end{equation}
where $w(t,x)$ is the solution to the homogeneous problem
\begin{equation}\label{Homog eqn Duhamel}
    \left\lbrace
    \begin{array}{l}
    w_{tt}(t,x)+Lw(t,x)+\alpha(x)w_{t}(t,x)=0 ,~~~(t,x)\in\left(0,\infty\right)\times (0,1),\\
    w(0,x)=u_{0}(x),\,\,\, w_{t}(0,x)=u_{1}(x),\,\,\, x\in(0,1),\\
    w(t,0)=w(t,1)=0,\,\,\,t\in (0,\infty),
    \end{array}
    \right.
\end{equation}
and for fixed $\tau\in\left(0,\infty\right)$, $v(t,x;\tau)$ solves the auxiliary problem
\begin{equation}\label{Aux eqn Duhamel}
    \left\lbrace
    \begin{array}{l}
    v_{tt}(t,x;\tau)+Lv(t,x;\tau)+a(x)v_{t}(t,x;\tau)=0 ,~~~(t,x)\in\left(\tau,\infty\right)\times (0,1),\\
    v(\tau,x;\tau)=0,\,\,\, v_{t}(\tau,x;\tau)=f(\tau,x),\,\,\, x\in(0,1),\\
    v(t,0;\tau)=v(t,1;\tau)=0,\,\,\,t\in (\tau,\infty).
    \end{array}
    \right.
\end{equation}
\end{prop}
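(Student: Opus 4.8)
The plan is to verify directly that the function $u$ defined by the right-hand side of \eqref{Sol Duhamel} solves all three lines of \eqref{Equation Duhamel}; combined with the (assumed) well-posedness of the homogeneous problem \eqref{Homog eqn Duhamel} this yields the claim. First I would check the data. Evaluating \eqref{Sol Duhamel} at $t=0$ gives $u(0,x)=w(0,x)+\int_0^0 v(0,x;\tau)\,\d\tau=u_0(x)$ by \eqref{Homog eqn Duhamel}, and the boundary conditions $u(t,0)=u(t,1)=0$ follow since $w$ and each $v(\cdot,\cdot;\tau)$ vanish for $x\in\{0,1\}$.

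Next I would differentiate \eqref{Sol Duhamel} in $t$ by the Leibniz rule for a parameter integral:
\begin{equation*}
    u_t(t,x)=w_t(t,x)+v(t,x;t)+\int_0^t v_t(t,x;\tau)\,\d\tau=w_t(t,x)+\int_0^t v_t(t,x;\tau)\,\d\tau,
\end{equation*}
where the boundary term $v(t,x;t)$ vanishes because $v(\tau,x;\tau)=0$ for every $\tau$, by the initial condition in \eqref{Aux eqn Duhamel}. In particular $u_t(0,x)=w_t(0,x)=u_1(x)$, the second initial condition. Differentiating once more, and using this time that $v_t(\tau,x;\tau)=f(\tau,x)$, I obtain
\begin{equation*}
    u_{tt}(t,x)=w_{tt}(t,x)+v_t(t,x;t)+\int_0^t v_{tt}(t,x;\tau)\,\d\tau=w_{tt}(t,x)+f(t,x)+\int_0^t v_{tt}(t,x;\tau)\,\d\tau.
\end{equation*}

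It then remains to apply $L$ and $\alpha(x)\partial_t$ to \eqref{Sol Duhamel} and add everything up. Since $L$ acts only in $x$ and commutes with the $\tau$-integration, and using the expression for $u_t$ above,
\begin{equation*}
    u_{tt}+Lu+\alpha u_t=\bigl(w_{tt}+Lw+\alpha w_t\bigr)+f(t,x)+\int_0^t\bigl(v_{tt}+Lv+\alpha v_t\bigr)(t,x;\tau)\,\d\tau=f(t,x),
\end{equation*}
because the first bracket vanishes by \eqref{Homog eqn Duhamel} and the integrand vanishes by \eqref{Aux eqn Duhamel}. This is precisely the equation in \eqref{Equation Duhamel}, so $u$ is the desired solution.

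The only genuine point of care is analytic rather than algebraic: the manipulations above require enough regularity of $(t,\tau)\mapsto v(t,x;\tau)$ to differentiate under the integral sign and to make $Lv$ meaningful. I would therefore read the computation at the level of the eigenfunction expansions of $w$ and $v$ (or assume sufficiently regular data), the general case being recovered from the density and energy arguments used later in the paper; uniqueness, when needed, again comes from well-posedness of the homogeneous problem applied to the difference of two solutions. I expect formulating this regularity hypothesis to be the main thing to pin down — the rest is the routine Leibniz-rule bookkeeping displayed above.
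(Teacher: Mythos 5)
Your proposal is correct and follows essentially the same route as the paper's proof: differentiate \eqref{Sol Duhamel} twice in $t$ via the Leibniz rule, use $v(t,x;t)=0$ and $v_t(t,x;t)=f(t,x)$ from \eqref{Aux eqn Duhamel}, apply $L$ and $\alpha(x)\partial_t$ under the integral, and sum, then check the initial and boundary data exactly as the paper does. Your closing remark on the regularity needed to differentiate under the integral sign is a reasonable extra precaution that the paper leaves implicit, but it does not change the argument.
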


\begin{proof}
    Firstly, we apply $\partial_{t}$ to $u$ in \eqref{Sol Duhamel}. We get
    \begin{equation}\label{sol Duhamel u_t}
        \partial_t u(t,x)=\partial_t w(t,x) + \int_0^t \partial_t v(t,x;\tau)\d \tau,
    \end{equation}
    where we used the fact that $v(t,x;t)=0$ coming from the initial condition in \eqref{Aux eqn Duhamel}. We differentiate again \eqref{sol Duhamel u_t} with respect to $t$ having in mind that $\partial_t v(t,x;t) = f(t,x)$, we get
    \begin{equation}\label{sol Duhamel u_tt}
        \partial_{tt} u(t,x)=\partial_{tt} w(t,x) + f(t,x) + \int_0^t \partial_{tt} v(t,x;\tau)\d \tau.
    \end{equation}
    Now, the operator $L$ when applied to $u$ in \eqref{Sol Duhamel} gives
    \begin{equation}\label{sol Duhamel L}
        L u(t,x)=L w(t,x) + \int_0^t L v(t,x;\tau)\d \tau.
    \end{equation}
    Multiplying \eqref{sol Duhamel u_t} by $a(x)$ yields
    \begin{equation}\label{sol Duhamel lambdau_t}
        a(x)\partial_t u(t,x)=a(x)\partial_t w(t,x) + \int_0^t a(x)\partial_t v(t,x;\tau)\d \tau.
    \end{equation}
    Combining \eqref{sol Duhamel u_tt}, \eqref{sol Duhamel L} and \eqref{sol Duhamel lambdau_t} and taking into consideration that $w$ and $v$ are the solutions to \eqref{Homog eqn Duhamel} and \eqref{Aux eqn Duhamel}, we arrive at
    \begin{equation*}
        u_{tt}(t,x)+a(x)u_{t}(t,x)+Lu(t,x)=f(t,x).
    \end{equation*}
    Noting that $u(0,x)=w(0,x)=u_0(x)$ from \eqref{Sol Duhamel} and that $u_t(0,x)=\partial_t w(0,x)=u_1(x)$ from \eqref{sol Duhamel u_t} and that from \eqref{Sol Duhamel}, the boundary conditions $u(t,0)=u(t,1)=0$ are satisfied concludes the proof.    
\end{proof}

\section{Classical case: Energy estimates}\label{Classical case}
In this section, we consider the case when the real potential $q$ and the equation coefficients $a$ and $b$ are regular functions. We also assume that $s\geq 0$. In this case, we obtain the well-posedness in the Sobolev spaces $W^s_\mathcal{L}(0,1)$ associated to the operator $\mathcal{L}^s$. We start by proving the well-posedness of our initial/boudary problem \eqref{Equation}-\eqref{boundary conditions} in the case when $a,b \equiv 0$. That is, for the equation
\begin{equation}\label{C.p1}
    \partial^2_t u(t,x)+\mathcal{L}^s u(t,x)=0,\quad (t,x)\in [0,T]\times (0,1),
\end{equation}
with initial conditions
\begin{equation}\label{C.p2}
u(0,x)=u_0(x),\quad 
\partial_t u(0,x)=u_1(x), \quad x\in (0,1),
\end{equation}
and Dirichlet boundary conditions
\begin{equation}\label{C.p3}
u(t,0)=u(t,1)=0,\quad t\in [0,T],
\end{equation}
where  $\mathcal{L}^s$ is defined as in Definition \ref{S.T fractional}.

\begin{thm}\label{Thm: Energy estimates 1}
Assume that $q \in L^{\infty}(0,1)$ is real. For any $s\geq 0$, if the initial data satisfy $(u_0,\, u_1) \in W^{s}_{\mathcal{L}}(0,1)\times L^{2}(0,1)$, then the equation \eqref{C.p1} with the initial/boundary conditions  \eqref{C.p2}-\eqref{C.p3}  has a unique solution $u\in C([0,T], W^{s}_{\mathcal{L}}(0,1))\cap C^1([0,T], L^{2}(0,1))$. It satisfies the estimates
\begin{equation}\label{est1}
    \|u(t,\cdot)\|_{L^2}\lesssim \|u_0\|_{L^2}+\|u_1\|_{W^{-s}_{\mathcal{L}}},
\end{equation}
\begin{equation}\label{est1-1}
    \|u(t,\cdot)\|_{W^{s}_{\mathcal{L}}}\lesssim \|u_0\|_{W^{s}_{\mathcal{L}}}+\|u_1\|_{L^{2}},
\end{equation}
and
\begin{equation}\label{est2}
\|\partial_t u(t,\cdot)\|_{L^2}\lesssim \| u_0\|_{_{W^s_\mathcal{L}}}+\|u_1\|_{L^2},
\end{equation}
where the constants are independent of $u_0$, $u_1$ and $q$.    
\end{thm}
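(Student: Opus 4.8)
The plan is to construct the solution by separation of variables, exploiting the spectral decomposition of $\mathcal L$ recalled in Section~\ref{Prelim}. Expanding the data in the orthonormal basis $\{\phi_n\}_{n\ge1}$ of $L^2(0,1)$ as $u_0=\sum_n u_{0,n}\phi_n$ and $u_1=\sum_n u_{1,n}\phi_n$, and writing a candidate solution as $u(t,x)=\sum_{n=1}^\infty u_n(t)\phi_n(x)$ with $u_n(t)=\langle u(t,\cdot),\phi_n\rangle$, the identity $\mathcal L^s\phi_n=\lambda_n^s\phi_n$ of Definition~\ref{S.T fractional} turns \eqref{C.p1}--\eqref{C.p2} into the decoupled family of ODEs
\begin{equation*}
    u_n''(t)+\lambda_n^s u_n(t)=0,\qquad u_n(0)=u_{0,n},\quad u_n'(0)=u_{1,n},
\end{equation*}
whose solution, since $\lambda_n\ge1$ by \eqref{e-val}, is explicitly
\begin{equation*}
    u_n(t)=u_{0,n}\cos\!\big(\lambda_n^{s/2}t\big)+u_{1,n}\,\frac{\sin\!\big(\lambda_n^{s/2}t\big)}{\lambda_n^{s/2}}.
\end{equation*}

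The three energy estimates are then immediate from Parseval's identity. Indeed $\|u(t,\cdot)\|_{L^2}^2=\sum_n|u_n(t)|^2$, $\|u(t,\cdot)\|_{W^s_{\mathcal L}}^2=\|\mathcal L^{s/2}u(t,\cdot)\|_{L^2}^2=\sum_n\lambda_n^s|u_n(t)|^2$ and $\|\partial_t u(t,\cdot)\|_{L^2}^2=\sum_n|u_n'(t)|^2$, while the explicit formula gives the pointwise bounds
\begin{equation*}
    |u_n(t)|\le|u_{0,n}|+\lambda_n^{-s/2}|u_{1,n}|,\qquad \lambda_n^{s/2}|u_n(t)|\le\lambda_n^{s/2}|u_{0,n}|+|u_{1,n}|,\qquad |u_n'(t)|\le\lambda_n^{s/2}|u_{0,n}|+|u_{1,n}|,
\end{equation*}
uniformly for $t\in[0,T]$. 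Squaring, summing in $n$, and recalling $\|u_1\|_{W^{-s}_{\mathcal L}}^2=\sum_n\lambda_n^{-s}|u_{1,n}|^2$ and $\|u_0\|_{W^s_{\mathcal L}}^2=\sum_n\lambda_n^s|u_{0,n}|^2$, we obtain \eqref{est1}, \eqref{est1-1} and \eqref{est2}, with constants depending on nothing but absolute ones (in particular independent of $u_0$, $u_1$, $q$).

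The same tail bounds show that the partial sums $S_N(t,x)=\sum_{n\le N}u_n(t)\phi_n(x)$ form a Cauchy sequence in $C([0,T],W^s_{\mathcal L}(0,1))$ and that $\partial_t S_N$ is Cauchy in $C([0,T],L^2(0,1))$; since each $S_N$ is smooth in $t$ with values in these spaces, the limit $u$ lies in $C([0,T],W^s_{\mathcal L}(0,1))\cap C^1([0,T],L^2(0,1))$, term-by-term $t$-differentiation is justified, and $u$ satisfies \eqref{C.p1} in $W^{-s}_{\mathcal L}(0,1)$ (equivalently, after pairing with test functions from $C^\infty_{\mathcal L}(0,1)$) together with the initial conditions \eqref{C.p2}; the Dirichlet conditions \eqref{C.p3} are built into the $\phi_n$. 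For uniqueness, if $u$ is any solution in this class with vanishing data, then pairing \eqref{C.p1} with $\phi_n\in C^\infty_{\mathcal L}(0,1)$ (using Proposition~\ref{prop1}) shows that $t\mapsto\langle u(t,\cdot),\phi_n\rangle$ is $C^2$ and solves $u_n''+\lambda_n^s u_n=0$ with $u_n(0)=u_n'(0)=0$, hence $u_n\equiv0$ for every $n$, so $u\equiv0$ by Parseval.

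I expect the only genuinely delicate point to be the passage from the formal series to an element of $C([0,T],W^s_{\mathcal L})\cap C^1([0,T],L^2)$: one must check the uniform-in-$t$ convergence of both the series and its termwise $t$-derivative in order to legitimise interchanging $\partial_t$ with the summation and to give a precise meaning to \eqref{C.p1}. Everything else reduces to Parseval's identity and the elementary trigonometric bounds above. Finally, the hypothesis $q\in L^\infty(0,1)$ is used only to ensure that the spectral results \eqref{e-val}--\eqref{low-est} of Section~\ref{Prelim} apply, since then $\nu(x):=\int_0^x q(\sigma)\,\d\sigma$ belongs to $L^2(0,1)$ and satisfies \eqref{con-q}.
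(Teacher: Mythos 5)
Your proposal is correct and follows essentially the same route as the paper: expansion in the eigenbasis $\{\phi_n\}$ (separation of variables), the explicit formula $u_n(t)=u_{0,n}\cos(\lambda_n^{s/2}t)+\lambda_n^{-s/2}u_{1,n}\sin(\lambda_n^{s/2}t)$, and Parseval's identity to obtain \eqref{est1}--\eqref{est2}. If anything, your version is slightly more careful than the paper's, since you also verify \eqref{est1-1} explicitly, justify convergence of the partial sums in $C([0,T],W^s_{\mathcal L})\cap C^1([0,T],L^2)$, and give the uniqueness argument by projecting onto each $\phi_n$.
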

Before giving the proof, one observes that the assumption $q \in L^\infty(0,1)$ fits with \eqref{con-q} since $L^{\infty}(0,1)$ is embedded in $L^{2}(0,1)$.
\begin{proof}
Following the arguments in \cite{RSY22}, we apply the technique of the separation of variables (see, e.g. \cite{GSS17}) to solve the equation \eqref{C.p1} with the initial-boundary conditions \eqref{C.p2}-\eqref{C.p3}. We look for a solution in the form
$$u(t,x)=T(t)X(x),$$
for functions $T(t)$ and $X(x)$ to be determined. Plugging $u(t,x)=T(t)X(x)$ into \eqref{C.p1}, we arrive at the equation
$$T''(t)X(x)+\mathcal{L}^s\left(T(t)X(x)\right)=0,$$
since the operator does not depend on $t$, we obtain
$$T''(t)X(x)+T(t)\mathcal{L}^sX(x)=0.$$
Dividing this equation by $T(t)X(x)$, we get
\begin{equation}\label{3-1}
    \frac{T''(t)}{T(t)}=\frac{-\mathcal{L}^sX(x)}{X(x)}=-\mu,
\end{equation}
for some constant $\mu$. Therefore, if there exists a solution $u(t,x) = T(t)X(x)$ of the wave equation, then $T(t)$ and $X(x)$ must satisfy the equations
$$\frac{T''(t)}{T(t)}=-\mu,$$
$$\frac{\mathcal{L}^sX(x)}{X(x)}=\mu,$$
for some constant $\mu$. In addition, in order for $u$ to satisfy the boundary conditions \eqref{C.p3}, we need our function $X$ to satisfy the boundary conditions \eqref{Dirihle}. That is, we need to find a function $X$ and a scalar $\mu=\lambda^s$, such that
\begin{equation}\label{4}
    \mathcal{L}^sX(x)=\lambda^s X(x),
\end{equation}
\begin{equation}\label{5}
    X(0)=X(1)=0.
\end{equation}

The equation \eqref{4} with the boundary condition \eqref{5} has eigenvalues of the form \eqref{e-val} with corresponding eigenfunctions as in \eqref{sol-SL} of the Sturm-Liouville operator $\mathcal{L}$ generated by the differential expression \eqref{St-L}.

Further, we solve the left hand side of the equation \eqref{3-1} with respect to the independent variable $t$, that is,
\begin{equation}\label{3}
        T''(t)=-\lambda^s T(t),  \qquad t\in [0,T].
\end{equation}

It is well known (\cite{GSS17}) that the solution of the equation \eqref{3} with the initial conditions \eqref{C.p2} is
$$T_n(t)=A_n \cos \left(\sqrt{\lambda_n^s}t\right)+\frac{1}{\sqrt{\lambda_n^s}}B_n \sin\left(\sqrt{\lambda_n^s}t\right),$$
where
$$A_n=\int\limits_0^1u_0(x)\phi_n(x)dx, \qquad B_n=\int\limits_0^1 u_1(x)\phi_n(x)dx.$$

Thus, the solution to \eqref{C.p1}  with the initial/boundary conditions \eqref{C.p2}-\eqref{C.p3} has the form
\begin{equation}\label{23}
    u(t,x)=\sum\limits_{n=1}^\infty \left[A_n \cos \left(\sqrt{\lambda_n^s}t\right)+\frac{1}{\sqrt{\lambda_n^s}}B_n\sin\sqrt{\lambda_n^s} t\right]\phi_n(x). 
\end{equation}

Let us prove that $u\in C^1([0,T],L^2(0,1))$. By using the Cauchy-Schwarz inequality and fixed $t$, we can deduce that
\begin{eqnarray}\label{25}
\|u(t, \cdot)\|^2_{L^2}&=&\int\limits_0^1|u(t,x)|^2dx \nonumber\\
&=&\int\limits_0^1\left|\sum\limits_{n=1}^\infty\left[A_n \cos \sqrt{\lambda_n^s} t+\frac{1}{\sqrt{\lambda_n^s}} B_n\sin\sqrt{\lambda_n^s} t\right]\phi_n(x)\right|^2dx\nonumber\\
&\lesssim& \int\limits_0^1\sum\limits_{n=1}^\infty\left|A_n \cos\sqrt{\lambda_n^s} t+\frac{1}{\sqrt{\lambda_n^s} }B_n\sin\sqrt{\lambda_n^s} t\right|^2|\phi_n(x)|^2dx\nonumber\\
&\leq& \int\limits_0^1\sum\limits_{n=1}^\infty\left(|A_n||\phi_n(x)|+\frac{1}{\sqrt{\lambda_n^s}}|B_n||\phi_n(x)|\right)^2 dx\nonumber\\
&\lesssim& \sum\limits_{n=1}^\infty\left(\int\limits_0^1|A_n|^2|\phi_n(x)|^2dx+\int\limits_0^1\left|\frac{B_n}{\sqrt{\lambda_n^s}}\right|^2|\phi_n(x)|^2dx\right). 
\end{eqnarray}

By using Parseval's identity, we get
$$
\sum\limits_{n=1}^\infty \int\limits_0^1|A_n|^2|\phi_n(x)|^2dx= \sum\limits_{n=1}^\infty |A_n|^2=\int\limits_0^1|u_0(x)|^2dx=\|u_0\|^2_{L^2}.
$$
For the second term in \eqref{25}, using the properties of the eigenvalues of the operator $\mathcal{L}$ and Parseval's identity again, we obtain the following estimate 
\begin{eqnarray}\label{W-1}
\sum\limits_{n=1}^\infty \int\limits_0^1\left|\frac{B_n}{\sqrt{\lambda_n^s}}\right|^2|\phi_n(x)|^2dx&=& \sum\limits_{n=1}^\infty \left|\frac{B_n}{\sqrt{\lambda_n^s}}\right|^2=\sum\limits_{n=1}^\infty \left|\int\limits_0^1\frac{1}{\sqrt{\lambda_n^s}}u_1(x)\phi_n(x)dx\right|^2\nonumber\\
&=&\sum\limits_{n=1}^\infty \left|\int\limits_0^1\mathcal{L}^{-\frac{s}{2}} u_1(x)\phi_n(x)dx\right|^2\nonumber\\
&=&\sum\limits_{n=1}^\infty \left|\mathcal{L}^{-\frac{s}{2}}u_{1,n}\right|^2=\|\mathcal{L}^{-\frac{s}{2}} u_{1}\|^2_{L_2}=\|u_1\|^2_{W^{-s}_\mathcal{L}}.
\end{eqnarray}
Therefore
$$
\|u(t,\cdot)\|^2_{L^2}\lesssim \|u_0\|^2_{L^2}+\|u_1\|^2_{W^{-s}_\mathcal{L}}.
$$
Now, let us estimate $\|\partial_t u(t,\cdot)\|_{L^2}$. We have
\begin{eqnarray}\label{t26}
\|\partial_t u(t,\cdot)\|_{L^2}^2&=&\int\limits_0^1|\partial_tu(t,x)|^2dt\nonumber\\
&=&\int\limits_0^1\left|\sum\limits_{n=1}^\infty\left[-\sqrt{\lambda_n^s}A_n\sin\left(\sqrt{\lambda_n^s}t\right)+\frac{1}{\sqrt{\lambda_n^s}}\sqrt{\lambda_n^s}B_n\cos \sqrt{\lambda_n^s} t\right]\phi_n(x)\right|^2dx \nonumber\\ &\lesssim& \int\limits_0^1\left(\sum\limits_{n=1}^\infty|\sqrt{\lambda_n^s} A_n |^2+\sum\limits_{n=1}^\infty|B_n|^2\right)|\phi_n(x)|^2dx\nonumber\\
&=&\sum\limits_{n=1}^\infty|\sqrt{\lambda_n^s} A_n |^2+\sum\limits_{n=1}^\infty|B_n|^2.
\end{eqnarray}
The second term in \eqref{t26} gives the norm of $\|u_1\|^2_{L^2}$ by Parseval's identity. Now, since $\lambda_n$ are eigenvalues and $\phi_n(x)$ are eigenfunctions of the operator $\mathcal{L}$, we have that 
\begin{eqnarray}\label{21-1}\sum\limits_{n=1}^\infty|\sqrt{\lambda_n^s}A_n|^2&=& \sum\limits_{n=1}^\infty\left|\sqrt{\lambda_n^s}\int\limits_0^1 u_0(x)\phi_n(x)dx\right|^2 =\sum\limits_{n=1}^\infty\left| \int\limits_0^1 \sqrt{\lambda_n^s}u_0(x)\phi_n(x)dx\right|^2\nonumber\\
&=&\sum\limits_{n=1}^\infty\left| \int\limits_0^1 \mathcal{L}^\frac{s}{2}u_0(x)\phi_n(x)dx\right|^2.
      \end{eqnarray}
It follows from Parseval's identity that
\begin{equation}\label{21-2}
    \sum\limits_{n=1}^\infty\left| \int\limits_0^1 \mathcal{L}^\frac{s}{2}u_0(x)\phi_n(x)dx\right|^2=\|\mathcal{L}^\frac{s}{2}u_0\|^2_{L^2}=\|u_0\|^2_{W^s_\mathcal{L}}.
\end{equation}
Thus, 
$$\|\partial_t u(t,\cdot)\|^2_{L^2}\lesssim \|u_0\|^2_{W^s_\mathcal{L}}+\|u_1\|^2_{L^2}.$$
The proof of Theorem \ref{Thm: Energy estimates 1} is then complete.
\end{proof}
In the case when $s=1$, the above estimates can be expressed in terms of all appearing coefficients. This will be needed later on, when the coefficients and data are singular. Let $s=1$. Then the equation \eqref{C.p1} with initial/boundary conditions \eqref{C.p2}-\eqref{C.p3} goes to the explicit form
\begin{equation}\label{without s}
\left\{\begin{array}{l}
\partial^2_t u(t,x)-\partial^2_x u(t,x)+q(x)u(t,x)=0,\quad (t,x)\in [0,T]\times (0,1),   \\
u(0,x)=u_0(x),\quad \partial_t u(0,x)=u_1(x), \quad x\in (0,1), \\
u(t,0)=u(t,1)=0,\quad t\in [0,T].
\end{array}\right.
\end{equation}

\begin{cor}\label{cor1} 
Let $q \in L^\infty(0,1)$ be real, and assume that $u_0 \in L^2(0,1)$ such that $u_0''\in L^2(0,1)$ and that $u_1 \in L^2(0,1)$. Then the problem \eqref{without s} has a unique solution $u\in C([0,T], L^2(0,1))$ which satisfies the estimates
\begin{equation}\label{ec1}
    \|u(t,\cdot)\|_{L^2}\lesssim \|u_0\|_{L^2}+\|u_1\|_{L^2},
\end{equation}
and
\begin{equation}\label{ec1-1}
    \|\partial_t u(t,\cdot)\|_{L^2}\lesssim \|u''_0\|_{L^2}+\|q\|_{L^\infty}\|u_0\|_{L^2}+\|u_1\|_{L^2},
\end{equation}
uniformly in $t\in [0,T]$.
\end{cor}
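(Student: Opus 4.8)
The plan is to read Corollary \ref{cor1} as the special case $s=1$ of Theorem \ref{Thm: Energy estimates 1}, supplemented by two elementary comparison estimates between the $\mathcal{L}$-Sobolev norms and the concrete norms $\|u_0''\|_{L^2}$, $\|q\|_{L^\infty}\|u_0\|_{L^2}$ appearing in the statement. First I would invoke Theorem \ref{Thm: Energy estimates 1} with $s=1$: the hypotheses $u_0\in L^2(0,1)$ with $u_0''\in L^2(0,1)$ together with the Dirichlet conditions $u_0(0)=u_0(1)=0$ (which are forced by the formulation \eqref{without s}) place $u_0$ in $W^1_{\mathcal{L}}(0,1)$, and $u_1\in L^2(0,1)$, so we obtain a unique solution $u\in C([0,T],W^1_{\mathcal{L}}(0,1))\cap C^1([0,T],L^2(0,1))$. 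Since $W^1_{\mathcal{L}}(0,1)\subset L^2(0,1)$ continuously by Proposition \ref{Prop w^-s}, in particular $u\in C([0,T],L^2(0,1))$, and we have at our disposal the bounds \eqref{est1}, \eqref{est1-1}, \eqref{est2}.

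For the estimate \eqref{ec1} I would combine \eqref{est1} with the Sobolev embedding of Proposition \ref{Prop w^-s}: inequality \eqref{Sobolev estimates 1} with $s=1$ gives $\|u_1\|_{W^{-1}_{\mathcal{L}}}\leq C_1\|u_1\|_{L^2}$, and inserting this into \eqref{est1} yields $\|u(t,\cdot)\|_{L^2}\lesssim \|u_0\|_{L^2}+\|u_1\|_{L^2}$ uniformly in $t\in[0,T]$.

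For \eqref{ec1-1} I would start from \eqref{est2}, which reads $\|\partial_t u(t,\cdot)\|_{L^2}\lesssim \|u_0\|_{W^1_{\mathcal{L}}}+\|u_1\|_{L^2}=\|\mathcal{L}^{1/2}u_0\|_{L^2}+\|u_1\|_{L^2}$, so it remains to bound $\|\mathcal{L}^{1/2}u_0\|_{L^2}$ by $\|u_0''\|_{L^2}+\|q\|_{L^\infty}\|u_0\|_{L^2}$. Writing $u_0=\sum_n A_n\phi_n$ and using that the eigenvalues of $\mathcal{L}$ lie outside the unit ball, so that $\lambda_n\leq\lambda_n^2$ for every $n$, Parseval's identity gives $\|\mathcal{L}^{1/2}u_0\|_{L^2}^2=\sum_n\lambda_n|A_n|^2\leq\sum_n\lambda_n^2|A_n|^2=\|\mathcal{L}u_0\|_{L^2}^2$. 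Since $q\in L^\infty(0,1)$, the functions $u_0$ admitted here belong to the domain of $\mathcal{L}$ and $\mathcal{L}u_0=-u_0''+qu_0$ as an element of $L^2(0,1)$, whence $\|\mathcal{L}u_0\|_{L^2}\leq\|u_0''\|_{L^2}+\|q\|_{L^\infty}\|u_0\|_{L^2}$. Chaining these two inequalities into \eqref{est2} produces \eqref{ec1-1}, uniformly in $t\in[0,T]$.

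The only genuinely delicate point is the identification $\mathcal{L}u_0=-u_0''+qu_0$, i.e. the fact that the abstract operator $\mathcal{L}$, defined spectrally in Definition \ref{S.T fractional}, acts as the differential expression \eqref{St-L} on the functions allowed here; this requires $u_0$ to satisfy the Dirichlet boundary conditions, which is exactly what also makes $u_0\in W^1_{\mathcal{L}}(0,1)$, so I would record this assumption explicitly at the outset. Everything else is routine: the substitution $s=1$ in Theorem \ref{Thm: Energy estimates 1}, one application of Parseval's identity, the spectral gap $\lambda_n\geq 1$, and the embedding of Proposition \ref{Prop w^-s}.
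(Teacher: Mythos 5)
Your proposal is correct and follows essentially the same route as the paper: the paper simply re-runs the $s=1$ case of the series estimates from Theorem \ref{Thm: Energy estimates 1} (its formulas \eqref{25} and \eqref{t26}), uses $\lambda_n\geq 1$ to replace $B_n/\sqrt{\lambda_n}$ by $B_n$ and $\sqrt{\lambda_n}A_n$ by $\lambda_n A_n$, and identifies $\lambda_n A_n$ with the Fourier coefficient of $-u_0''+qu_0$, which is exactly your chain $\|\mathcal{L}^{1/2}u_0\|_{L^2}\leq\|\mathcal{L}u_0\|_{L^2}\leq\|u_0''\|_{L^2}+\|q\|_{L^\infty}\|u_0\|_{L^2}$ combined with \eqref{est1}, \eqref{est2} and Proposition \ref{Prop w^-s}. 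Your explicit remark that the identification $\mathcal{L}u_0=-u_0''+qu_0$ (and membership $u_0\in W^1_{\mathcal{L}}$) relies on the Dirichlet compatibility of $u_0$ is a point the paper leaves implicit, and recording it is appropriate.
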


\begin{proof}
By using the inequality \eqref{25} for $s=1$, we get
\begin{eqnarray}\label{26}
\|u(t, \cdot)\|^2_{L^2}&\lesssim& \sum\limits_{n=1}^\infty\left(\int\limits_0^1|A_n|^2|\phi_n(x)|^2dx+\int\limits_0^1\left|\frac{B_n}{\sqrt{\lambda_n}}\right|^2|\phi_n(x)|^2dx\right).
\end{eqnarray}
According to \eqref{e-val}, we have that $\lambda_n > 1$ for $n=1,\cdots,$ thus
\begin{equation}\label{B-lam}
\int\limits_0^1\left|\frac{B_n}{\sqrt{\lambda_n}}\right|^2|\phi_n(x)|^2dx\leq \int\limits_0^1|B_n|^2|\phi_n(x)|^2dx.
\end{equation}
By using Parseval's identity for \eqref{26} and taking into account the last inequality, we get
$$\|u(t, \cdot)\|^2_{L^2}\lesssim \sum\limits_{n=1}^\infty\left(|A_n|^2+|B_n|^2\right) = \|u_0\|^2_{L^2}+\|u_1\|^2_{L^2},$$
implying \eqref{ec1}. Now, from \eqref{t26} we have that
\begin{equation*}
\|\partial_t u(t,\cdot)\|^2_{L^2} \lesssim \int\limits_0^1\left(\sum\limits_{n=1}^\infty|\sqrt{\lambda_n} A_n |^2+\sum\limits_{n=1}^\infty|B_n|^2\right)dx.
\end{equation*}
The second term of this sum gives the norm of $\|u_1\|^2_{L^2}$ by Parseval's identity. Since $\lambda_n$, for $n=1,\cdots,$ are eigenvalues of the operator $\mathcal{L}$, we obtain 
\begin{eqnarray*}\label{21}\sum\limits_{n=1}^\infty|\sqrt{\lambda_n}A_n|^2&=& \sum\limits_{n=1}^\infty\left|\sqrt{\lambda_n}\int\limits_0^1 u_0(x)\phi_n(x)dx\right|^2 \leq\sum\limits_{n=1}^\infty\left| \int\limits_0^1 \lambda_n u_0(x)\phi_n(x)dx\right|^2\nonumber\\
&=&\sum\limits_{n=1}^\infty\left| \int\limits_0^1 \left(-u''_0(x)+q(x)u_0(x)\right)\phi_n(x)dx\right|^2\\
&\lesssim& \sum\limits_{n=1}^\infty\left| \int\limits_0^1 u''_0(x)\phi_n(x)dx\right|^2+\sum\limits_{n=1}^\infty\left|\int\limits_0^1q(x)u_0(x)\phi_n(x)dx\right|^2.
      \end{eqnarray*}
Again, using Parseval's identity for the second term and using that $q\in L^\infty$, we get
$$\sum\limits_{n=1}^\infty\left|\int\limits_0^1q(x)u_0(x)\phi_n(x)dx\right|^2=\sum\limits_{n=1}^\infty\left|\langle (q u_0),\phi_n\rangle\right|^2=\|q u_0\|^2_{L^2}\leq \|q\|^2_{L^\infty} \|u_0\|^2_{L^2}.$$
Similarly for the first term, we get 
$$\sum\limits_{n=1}^\infty\left| \int\limits_0^1 u''_0(x)\phi_n(x)dx\right|^2=\sum\limits_{n=1}^\infty |u_{0,n}''|^2=\|u_0''\|^2_{L^2},$$
therefore
\begin{eqnarray}\label{LA}\sum\limits_{n=1}^\infty|\sqrt{\lambda_n}A_n|^2&\lesssim& \|u_0''\|^2_{L^2}+\|q\|^2_{L^\infty}\|u_0\|^2_{L^2}.
\end{eqnarray}
Thus, 
$$\|\partial_t u(t,\cdot)\|^2_{L^2}\lesssim \|u''_0\|^2_{L^2}+\|q\|^2_{L^\infty}\|u_0\|^2_{L^2}+\|u_1\|^2_{L^2}.$$
This completes the proof.
\end{proof}

Now we consider the case when $a,b \not\equiv 0$. That is, we consider the problem
\begin{equation}\label{S-L eq in lemma}
     \left\lbrace
    \begin{array}{l}
    \partial_{t}^{2}u(t,x) + \mathcal{L}^su(t,x) + a(x)u(t,x) + b(x)u_{t}(t,x) =0, \quad (t,x)\in [0,T]\times (0,1),\\
    u(0,x)=u_{0}(x), \quad u_{t}(0,x)=u_{1}(x),\\
    u(t,0)=u(t,1)=0, \quad t\in [0,T].
    \end{array}
    \right.
\end{equation}

\begin{thm}\label{Thm energy estimates 2}
    Let $T>0$ and $s\geq 0$. Assume $a,b\in L^{\infty}(0,1)$ to be non-negative, and $q\in L^{\infty}(0,1)$ is real, and let $u_0 \in W_{\mathcal{L}}^s(0,1)$ and $u_1 \in L^2(0,1)$. Then, there exists a unique solution $u\in C([0,T];W_{\mathcal{L}}^{s}(0,1))\cap C^1([0,T];L^{2}(0,1))$ to the problem \eqref{S-L eq in lemma} and it satisfies the estimates
    \begin{align}\label{Energy estimates}
        \bigg\{ \Vert u(t,\cdot)\Vert_{L^2}, & \Vert \mathcal{L}^{\frac{s}{2}}u(t,\cdot)\Vert_{L^2}, \Vert u_t(t,\cdot)\Vert_{L^2} \bigg\} \lesssim\\
        & (1 + \Vert a\Vert_{L^{\infty}} + \Vert b\Vert_{L^{\infty}})\big[\Vert u_0\Vert_{W_{\mathcal{L}}^s} + \Vert u_{1}\Vert_{L^2}\big]. \nonumber
    \end{align}
\end{thm}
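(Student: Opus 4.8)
The plan is to combine Duhamel's principle (Proposition~\ref{Prop Duhamel}) with the constant‑coefficient estimates of Theorem~\ref{Thm: Energy estimates 1} to obtain existence and uniqueness, and then to run a direct energy identity---crucially exploiting $a,b\ge 0$---for the quantitative bound \eqref{Energy estimates}. The point of the sign hypotheses is precisely that they keep the dependence of the estimate on the coefficients polynomial, rather than exponential as a naive Gronwall argument would give.

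\emph{Existence and uniqueness.} Rewrite \eqref{S-L eq in lemma} via Proposition~\ref{Prop Duhamel} (with $L=\mathcal L^s$, absorbing no damping) as the fixed‑point problem $u=\Phi u$, where
\[
\Phi u(t,\cdot):=w(t,\cdot)-\int_0^t v(t,\cdot;\tau)\,d\tau,
\]
$w$ solving the homogeneous $\mathcal L^s$‑wave problem with data $(u_0,u_1)$ and $v(\cdot;\tau)$ solving the homogeneous $\mathcal L^s$‑wave problem started at time $\tau$ with data $\big(0,\,a\,u(\tau,\cdot)+b\,u_t(\tau,\cdot)\big)$. Applying \eqref{est1}--\eqref{est2} to $w$ and to each $v(\cdot;\tau)$, together with the embedding \eqref{Sobolev estimates 1} in the form $\|f\|_{W^{-s}_{\mathcal L}}\le C_1\|f\|_{L^2}$ and the crude bound $\|a\,u(\tau,\cdot)+b\,u_t(\tau,\cdot)\|_{L^2}\le(\|a\|_{L^\infty}+\|b\|_{L^\infty})\,\|u(\tau,\cdot)\|_{s}$, one checks that $\Phi$ maps $C([0,T];W^s_{\mathcal L})\cap C^1([0,T];L^2)$ into itself and is a contraction on a short interval $[0,T_0]$, with $T_0$ depending only on $\|a\|_{L^\infty}+\|b\|_{L^\infty}$; iterating over $[0,T]$ in finitely many steps yields the unique solution in the asserted class. (A Galerkin scheme in $\mathrm{span}\{\phi_1,\dots,\phi_N\}$ works equally well; here $a(x),b(x)$ couple the modes, so the resulting $N\times N$ ODE system does not decouple.)

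\emph{Energy estimate.} Put
\[
E(t):=\|u_t(t,\cdot)\|_{L^2}^2+\|\mathcal L^{s/2}u(t,\cdot)\|_{L^2}^2+\langle a\,u(t,\cdot),u(t,\cdot)\rangle_{L^2}.
\]
Using the equation in \eqref{S-L eq in lemma}, the identities \eqref{L^s}--\eqref{L^s+s'} (so that $\tfrac{d}{dt}\|\mathcal L^{s/2}u\|_{L^2}^2=2\Re\langle\mathcal L^s u,u_t\rangle$), and that $a$ is real, one gets
\[
E'(t)=2\,\Re\langle u_{tt}+\mathcal L^s u+a\,u,\,u_t\rangle=-2\,\langle b\,u_t,u_t\rangle\le 0,
\]
since $b\ge 0$; the computation is legitimate for the solution built above (or literally for the Galerkin approximants, passing to the limit afterwards), because $u_{tt}=-\mathcal L^s u-a\,u-b\,u_t$. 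Hence $E(t)\le E(0)$, and, using $a\ge 0$ and $\|u_0\|_{L^2}\le C_2\|u_0\|_{W^s_{\mathcal L}}$ from \eqref{Sobolev estimates 2},
\[
E(t)\le\|u_1\|_{L^2}^2+\|\mathcal L^{s/2}u_0\|_{L^2}^2+\|a\|_{L^\infty}\|u_0\|_{L^2}^2\lesssim(1+\|a\|_{L^\infty})\big(\|u_0\|_{W^s_{\mathcal L}}^2+\|u_1\|_{L^2}^2\big).
\]
Since $a\ge 0$ forces $\|u_t(t,\cdot)\|_{L^2}^2,\ \|\mathcal L^{s/2}u(t,\cdot)\|_{L^2}^2\le E(t)$, this already gives the claimed bounds for these two quantities. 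For $\|u(t,\cdot)\|_{L^2}$, integrate $u(t,\cdot)=u_0+\int_0^t u_t(\tau,\cdot)\,d\tau$ to get $\|u(t,\cdot)\|_{L^2}\le\|u_0\|_{L^2}+T\sup_{[0,T]}\|u_t(\tau,\cdot)\|_{L^2}$ and insert the previous bound and \eqref{Sobolev estimates 2}. Taking square roots and using $\sqrt{1+\|a\|_{L^\infty}}\le 1+\|a\|_{L^\infty}+\|b\|_{L^\infty}$ gives \eqref{Energy estimates}. (Uniqueness also follows directly: the difference of two solutions has zero data, hence $E\equiv 0$.)

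\emph{Main obstacle.} The only genuinely non‑routine step is existence, since the spatial dependence of $a,b$ breaks the separation of variables behind the explicit series of Theorem~\ref{Thm: Energy estimates 1}, so one has to pass through Duhamel (or Galerkin) and verify the function‑space bookkeeping closes. The energy estimate itself is mechanical, the one substantive observation being that $a,b\ge 0$ give $E'\le 0$, which is exactly what keeps the dependence on $\|a\|_{L^\infty}$, $\|b\|_{L^\infty}$ polynomial.
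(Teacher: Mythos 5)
Your argument is correct, and its core coincides with the paper's: the paper also multiplies \eqref{S-L eq in lemma} by $u_t$, forms the same functional $E(t)=\Vert u_t\Vert_{L^2}^2+\Vert\mathcal L^{s/2}u\Vert_{L^2}^2+\Vert a^{1/2}u\Vert_{L^2}^2$, and uses $E'(t)=-2\Vert b^{1/2}u_t\Vert_{L^2}^2\le 0$ to bound $\Vert u_t\Vert_{L^2}$ and $\Vert\mathcal L^{s/2}u\Vert_{L^2}$. Where you diverge is in the remaining $\Vert u(t,\cdot)\Vert_{L^2}$ bound and in existence. For the $L^2$ bound the paper rewrites the equation as $u_{tt}+\mathcal L^su=f$ with $f=-au-bu_t$, invokes Duhamel (Proposition \ref{Prop Duhamel}), estimates $w$ and $v(\cdot;\tau)$ by \eqref{est1}, and then passes from $\Vert f\Vert_{W^{-s}_{\mathcal L}}$ to $\Vert f\Vert_{L^2}$ via Proposition \ref{Prop w^-s} (including a short discussion of the boundary values of $u_t$ to justify that embedding); your alternative $u(t,\cdot)=u_0+\int_0^t u_t(\tau,\cdot)\,d\tau$ is more elementary, avoids the $W^{-s}_{\mathcal L}$ detour entirely, and yields the same polynomial dependence with a factor $T$ absorbed into the implicit constant. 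On existence, the paper's Duhamel representation has a source $f$ depending on the unknown $u$, so as written it functions as an a priori estimate rather than a construction; your contraction (or Galerkin) argument built on \eqref{est1}--\eqref{est2} and \eqref{Sobolev estimates 1} genuinely closes that loop, at the price of some bookkeeping the paper does not carry out. What the paper's Duhamel route buys in exchange is that the same machinery is reused verbatim in the uniqueness and consistency proofs (Theorems \ref{Thm1 uniqueness} and \ref{Thm Consistency 1}), where the source term is genuinely external and the representation is no longer circular.
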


\begin{proof}
    By multiplying the equation in \eqref{S-L eq in lemma} by $u_t$ and integrating with respect to the variable $x$ over $[0,1]$, we get
    \begin{equation}\label{Energy funct}
        \langle u_{tt}(t,\cdot),u_{t}(t,\cdot)\rangle_{L^2} + \langle \mathcal{L}^{s}u(t,\cdot),u_{t}(t,\cdot)\rangle_{L^2}
         + \langle a(\cdot)u(t,\cdot),u_{t}(t,\cdot)\rangle_{L^2} + \langle b(\cdot)u_{t}(t,\cdot),u_{t}(t,\cdot)\rangle_{L^2}=0.
    \end{equation}
    It is easy to see that
    \begin{equation}\label{Term1}
        \langle u_{tt}(t,\cdot),u_{t}(t,\cdot)\rangle_{L^2} = \frac{1}{2}\partial_{t}\langle u_{t}(t,\cdot),u_{t}(t,\cdot)\rangle_{L^2} = \frac{1}{2}\partial_{t}\Vert u_{t}(t,\cdot)\Vert_{L^2}^2,
    \end{equation}
    and since the fractional Sturm-Liouville operator is self-adjoint (see Proposition \ref{prop1}) and by the use of the semi-group property \eqref{L^s+s'}, we get
    \begin{align}\label{Term2}
        \langle \mathcal{L}^{s}u(t,\cdot),u_{t}(t,\cdot)\rangle_{L^2} &= \frac{1}{2}\partial_{t}\langle \mathcal{L}^{\frac{s}{2}}u(t,\cdot),\mathcal{L}^{\frac{s}{2}}u_{t}(t,\cdot)\rangle_{L^2} \\
        & = \frac{1}{2}\partial_{t}\Vert \mathcal{L}^{\frac{s}{2}}u(t,\cdot)\Vert_{L^2}^2.\nonumber
    \end{align}
    Moreover, we have
    \begin{equation}\label{Term3}
        \langle a(\cdot)u(t,\cdot),u_{t}(t,\cdot)\rangle_{L^2} = \frac{1}{2}\partial_t\Vert a^{\frac{1}{2}}(\cdot)u(t,\cdot)\Vert_{L^2}^2,
    \end{equation}
    and
    \begin{equation}\label{Term4}
        \langle b(\cdot)u_{t}(t,\cdot),u_{t}(t,\cdot)\rangle_{L^2} = \Vert b^{\frac{1}{2}}(\cdot)u_{t}(t,\cdot)\Vert_{L^2}^2.
    \end{equation}
    By substituting all these terms in \eqref{Energy funct} we arrive at
    \begin{equation}\label{Energy functional1}
        \partial_{t}\Big[ \Vert u_{t}(t,\cdot)\Vert_{L^2}^2 + \Vert \mathcal{L}^{\frac{s}{2}}u(t,\cdot)\Vert_{L^2}^2 + \Vert a^{\frac{1}{2}}(\cdot)u(t,\cdot)\Vert_{L^2}^2\Big] = -2\Vert b^{\frac{1}{2}}(\cdot)u_{t}(t,\cdot)\Vert_{L^2}^2.
    \end{equation}
    By denoting
    \begin{equation}
        E(t):= \Vert u_{t}(t,\cdot)\Vert_{L^2}^2 + \Vert \mathcal{L}^{\frac{s}{2}}u(t,\cdot)\Vert_{L^2}^2 + \Vert a^{\frac{1}{2}}(\cdot)u(t,\cdot)\Vert_{L^2}^2,
    \end{equation}
    it follows that the functional $E(t)$ is decreasing over $[0,1]$ and thus we have $E(t)\leq E(0)$, for all $t\in [0,T]$. We get the estimates
    \begin{align}\label{Estimate u_t}
        \Vert u_{t}(t,\cdot)\Vert_{L^2}^2 & \lesssim \Vert u_{1}\Vert_{L^2}^2 + \Vert \mathcal{L}^{\frac{s}{2}}u_0\Vert_{L^2}^2 + \Vert a^{\frac{1}{2}}u_0\Vert_{L^2}^2\\
        & \lesssim \Vert u_{1}\Vert_{L^2}^2 + \Vert u_0\Vert_{W_{\mathcal{L}}^s}^2 + \Vert a\Vert_{L^{\infty}}\Vert u_0\Vert_{L^2}^2 ,\nonumber
    \end{align}
    and similarly
    \begin{equation}\label{Estimate Lu}
        \Vert \mathcal{L}^{\frac{s}{2}}u(t,\cdot)\Vert_{L^2}^2 \lesssim \Vert u_{1}\Vert_{L^2}^2 + \Vert u_0\Vert_{W_{\mathcal{L}}^s}^2 + \Vert a\Vert_{L^{\infty}}\Vert u_0\Vert_{L^2}^2 ,
    \end{equation}
    and
    \begin{equation}\label{Estimate au}
        \Vert a^{\frac{1}{2}}(\cdot)u(t,\cdot)\Vert_{L^2}^2 \lesssim \Vert u_{1}\Vert_{L^2}^2 + \Vert u_0\Vert_{W_{\mathcal{L}}^s}^2 + \Vert a\Vert_{L^{\infty}}\Vert u_0\Vert_{L^2}^2 .
    \end{equation}
    To estimate the solution $u$, we proceed as follows. We rewrite the equation in \eqref{S-L eq in lemma} as
    \begin{equation}\label{Eq Duham}
        \partial_{t}^{2}u(t,x) + \mathcal{L}^su(t,x) = f(t,x), \quad (t,x)\in [0,T]\times (0,1),
    \end{equation}
    where
    \begin{equation*}
        f(t,x):=-a(x)u(t,x) - b(x)u_{t}(t,x),
    \end{equation*}
    and we apply Duhamel's principle. According to Proposition \ref{Prop Duhamel}, the solution to \eqref{Eq Duham} has the representation
    \begin{equation}\label{Sol Duhamel Thm}
        u(t,x)= w(t,x) + \int_0^t v(t,x;\tau)\d \tau,
    \end{equation}
    where $w(t,x)$ is the solution to the homogeneous problem
    \begin{equation}\label{Homog eqn Duhamel Thm}
        \left\lbrace
        \begin{array}{l}
        w_{tt}(t,x)+\mathcal{L}^s w(t,x)=0, \quad (t,x)\in [0,T]\times (0,1),\\
        w(0,x)=u_{0}(x),\quad w_{t}(0,x)=u_{1}(x),\quad x\in(0,1),\\
        w(t,0)=w(t,1)=0, \quad t\in[0,T].
        \end{array}
        \right.
    \end{equation}
    and $v(t,x;s)$ solves
        \begin{equation}\label{Aux eqn Duhamel Thm}
        \left\lbrace
        \begin{array}{l}
        v_{tt}(t,x;\tau)+\mathcal{L}^s v(t,x;\tau)=0, \quad(t,x)\in (\tau,T)\times (0,1),\\
        v(\tau,x;\tau)=0,\,\,\, v_{t}(\tau,x;\tau)=f(\tau,x),\quad x\in(0,1),\\
        v(t,0;\tau)=v(t,1;\tau)=0, \quad t\in [0,T].
        \end{array}
    \right.
    \end{equation}
    Taking the $L^2$ norm in \eqref{Sol Duhamel Thm} gives
    \begin{equation}
       \Vert u(t,\cdot)\Vert_{L^2} \leq \Vert w(t,\cdot)\Vert_{L^2} + \int_0^t \Vert v(t,\cdot;\tau)\Vert_{L^2}\d \tau,
    \end{equation}
    where we used Minkowski’s integral inequality. The terms on the right hand side can be estimated as follows:
    \begin{equation}\label{Estimate w thm}
        \Vert w(t,\cdot)\Vert_{L^2} \lesssim \|u_0\|_{L^2}+\|u_1\|_{W^{-s}_\mathcal{L}},
    \end{equation}
    which follows from \eqref{est1} since $w$ is the solution to the homogeneous problem and still by \eqref{est1}, the second term is estimated by
    \begin{align}\label{Estimate v thm}
        \Vert v(t,\cdot;\tau)\Vert_{L^2} & \lesssim \Vert f(\tau,\cdot) \Vert_{W^{-s}_\mathcal{L}}\\
        & \leq \Vert a(\cdot)u(\tau,\cdot)\Vert_{W^{-s}_\mathcal{L}} + \Vert b(\cdot)u_{t}(\tau,\cdot) \Vert_{W^{-s}_\mathcal{L}} \nonumber \\
        & \lesssim \Vert a(\cdot)u(\tau,\cdot)\Vert_{L^2} + \Vert b(\cdot)u_{t}(\tau,\cdot) \Vert_{L^2}. \nonumber
    \end{align}
    The last inequality follows from the fact that from \eqref{Estimate u_t} we have that $u_{t}\in L^2(0,1)$, and since $\{\phi_n\}_{n=1,\cdots,}$, the family of eigenfunctions is an orthonormal basis in $L^2(0,1)$, then, $u_t$ can be expanded in terms of this basis as
    \begin{equation}\label{u_t expansion}
        u_t(t,x)=\sum_{n=1}^{\infty}\Tilde{u}_{t,n}(t)\phi_{n}(x), \text{~for~} (t,x)\in [0,T]\times (0,1),
    \end{equation}
    where $\Tilde{u}_{t,n} = \langle u_t ,\phi_n\rangle_{L^2}$ for $n=1,\cdots,$. It follows from \eqref{u_t expansion} that $u_{t}(t,0)=u_{t}(t,1) = 0$ for all $t\in [0,T]$, which allows us to use Proposition \ref{Prop w^-s} as $s\geq 0$. \\
    On the one hand we have
    \begin{align}\label{Estimate v thm1}
        \Vert a(\cdot)u(\tau,\cdot)\Vert_{L^2} & \leq \Vert a\Vert_{L^{\infty}}^{\frac{1}{2}} \Vert a^{\frac{1}{2}}(\cdot)u(\tau,\cdot)\Vert_{L^2}\\
        & \lesssim \Vert a\Vert_{L^{\infty}}^{\frac{1}{2}} \big[\Vert u_{1}\Vert_{L^2}^2 + \Vert u_0\Vert_{W_{\mathcal{L}}^s}^2 + \Vert a\Vert_{L^{\infty}}\Vert u_0\Vert_{L^2}^2\big]^{\frac{1}{2}}, \nonumber
    \end{align}
    which comes from \eqref{Estimate au}. On the other hand
    \begin{align}\label{Estimate v Thm2}
        \Vert b(\cdot)u_{t}(\tau,\cdot)\Vert_{L^2} & \leq \Vert b\Vert_{L^{\infty}} \Vert u_{t}(\tau,\cdot)\Vert_{L^2}\\
        & \lesssim \Vert b\Vert_{L^{\infty}}\big[\Vert u_{1}\Vert_{L^2}^2 + \Vert u_0\Vert_{W_{\mathcal{L}}^s}^2 + \Vert a\Vert_{L^{\infty}}\Vert u_0\Vert_{L^2}^2\big]^{\frac{1}{2}}. \nonumber
    \end{align}
    The latter results from \eqref{Estimate u_t}. We obtain then
    \begin{equation}\label{Estimate v thm3}
        \Vert v(t,\cdot;\tau)\Vert_{L^2} \lesssim (1 + \Vert a\Vert_{L^{\infty}} + \Vert b\Vert_{L^{\infty}})\big[\Vert u_0\Vert_{L^2} + \Vert u_0\Vert_{W_{\mathcal{L}}^s} + \Vert u_{1}\Vert_{L^2} \big].
    \end{equation}
    We substitute \eqref{Estimate w thm} and \eqref{Estimate v thm3} into \eqref{Sol Duhamel Thm}, we get our estimate for $u$,
    \begin{align}
        \Vert u(t,\cdot)\Vert_{L^2} &\lesssim (1 + \Vert a\Vert_{L^{\infty}} + \Vert b\Vert_{L^{\infty}})\big[\Vert u_0\Vert_{L^2} + \Vert u_0\Vert_{W_{\mathcal{L}}^s} + \Vert u_{1}\Vert_{L^2} + \Vert u_{1}\Vert_{W_{\mathcal{L}}^{-s}} \big]\nonumber\\
        &\lesssim (1 + \Vert a\Vert_{L^{\infty}} + \Vert b\Vert_{L^{\infty}})\big[\Vert u_0\Vert_{W_{\mathcal{L}}^s} + \Vert u_{1}\Vert_{L^2}\big].
    \end{align}
    The last estimate follows by Proposition \ref{Prop w^-s}. This completes the proof of the theorem.  
\end{proof}
In the case when $s=1$, the estimates in Theorem \ref{Thm energy estimates 2} can be expressed in terms of all coefficients appearing in \eqref{S-L eq in lemma}, including the potential $q$. This removes the dependence of the estimates on $\mathcal{L}$.

\begin{cor}\label{cor2}
    Let $T>0$. Assume $a,b\in L^{\infty}(0,1)$ to be non-negative, and $q\in L^{\infty}(0,1)$ is real. Let $u_0 \in L^2(0,1)$ be such that $u_0'' \in L^2(0,1)$, and let $u_1 \in L^2(0,1)$. Then, the problem
    \begin{equation}\label{S-L eq in corollary}
        \left\lbrace
        \begin{array}{l}
        \partial_{t}^{2}u(t,x) + \mathcal{L}u(t,x) + a(x)u(t,x) + b(x)u_{t}(t,x) =0, \quad (t,x)\in [0,T]\times (0,1),\\
        u(0,x)=u_{0}(x), \quad u_{t}(0,x)=u_{1}(x),\\
        u(t,0)=u(t,1)=0, \quad t\in [0,T],
        \end{array}
        \right.
    \end{equation}
    has a unique solution $u\in C([0,T];W_{\mathcal{L}}^{1}(0,1))\cap C^1([0,T];L^{2}(0,1))$ and it satisfies
    \begin{align}\label{Energy estimates cor}
        \bigg\{ \Vert u(t,\cdot)\Vert_{L^2}, &\Vert \mathcal{L}^{\frac{1}{2}}u(t,\cdot)\Vert_{L^2}, \Vert u_t(t,\cdot)\Vert_{L^2} \bigg\} \lesssim\\ 
        &(1 + \Vert q\Vert_{L^{\infty}})(1 + \Vert a\Vert_{L^{\infty}})(1 + \Vert b\Vert_{L^{\infty}})\big[\Vert u_0\Vert_{L^2} + \Vert u''_0\Vert_{L^2} + \Vert u_{1}\Vert_{L^2} \big]. \nonumber
    \end{align}
\end{cor}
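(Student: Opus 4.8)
The plan is to derive the corollary directly from Theorem \ref{Thm energy estimates 2} with $s=1$, the only additional ingredient being to bound the ``intrinsic'' norm $\|u_0\|_{W^1_{\mathcal{L}}}$ that appears on the right-hand side of \eqref{Energy estimates} by the concrete quantities $\|u_0\|_{L^2}$, $\|u_0''\|_{L^2}$ and $\|q\|_{L^\infty}$, thereby removing the dependence of the estimate on $\mathcal{L}$.

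First I would verify that Theorem \ref{Thm energy estimates 2} applies: since $u_0,u_0''\in L^2(0,1)$ (with $u_0$ vanishing at the endpoints) and $q\in L^\infty(0,1)$, one has $\mathcal{L}u_0=-u_0''+qu_0\in L^2(0,1)$, and because $\lambda_n>1$ for all $n$ by \eqref{e-val} this forces $\mathcal{L}^{1/2}u_0\in L^2(0,1)$, i.e. $u_0\in W^1_{\mathcal{L}}(0,1)$. Theorem \ref{Thm energy estimates 2} with $s=1$ then provides the unique solution $u\in C([0,T];W^1_{\mathcal{L}}(0,1))\cap C^1([0,T];L^2(0,1))$ of \eqref{S-L eq in corollary} together with
\begin{equation*}
  \big\{\|u(t,\cdot)\|_{L^2},\ \|\mathcal{L}^{1/2}u(t,\cdot)\|_{L^2},\ \|u_t(t,\cdot)\|_{L^2}\big\}\lesssim(1+\|a\|_{L^\infty}+\|b\|_{L^\infty})\big[\|u_0\|_{W^1_{\mathcal{L}}}+\|u_1\|_{L^2}\big].
\end{equation*}

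The key step is the estimate for $\|u_0\|_{W^1_{\mathcal{L}}}$, which is exactly the computation \eqref{LA} carried out in the proof of Corollary \ref{cor1}: writing $A_n=\langle u_0,\phi_n\rangle$ and using $\lambda_n>1$, $\|u_0\|_{W^1_{\mathcal{L}}}^2=\|\mathcal{L}^{1/2}u_0\|_{L^2}^2=\sum_n\lambda_n|A_n|^2\le\sum_n\lambda_n^2|A_n|^2=\sum_n|\langle -u_0''+qu_0,\phi_n\rangle|^2=\|-u_0''+qu_0\|_{L^2}^2$, so that $\|u_0\|_{W^1_{\mathcal{L}}}\lesssim\|u_0''\|_{L^2}+\|q\|_{L^\infty}\|u_0\|_{L^2}$ by Parseval's identity. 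Substituting this into the displayed bound and using the elementary inequalities $1+\|a\|_{L^\infty}+\|b\|_{L^\infty}\le(1+\|a\|_{L^\infty})(1+\|b\|_{L^\infty})$ and $\|q\|_{L^\infty}\|u_0\|_{L^2}+\|u_0''\|_{L^2}+\|u_1\|_{L^2}\le(1+\|q\|_{L^\infty})(\|u_0\|_{L^2}+\|u_0''\|_{L^2}+\|u_1\|_{L^2})$ then yields \eqref{Energy estimates cor}.

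I do not expect a genuine obstacle, since the result is a specialisation of Theorem \ref{Thm energy estimates 2}; the only point needing care is the bookkeeping of the constants, so that the factors $1+\|q\|_{L^\infty}$, $1+\|a\|_{L^\infty}$, $1+\|b\|_{L^\infty}$ come out multiplicatively as in \eqref{Energy estimates cor} --- the form that will be convenient in Section \ref{VW well-posed} --- rather than merged into a single sum. As already noted after Theorem \ref{Thm: Energy estimates 1}, the hypothesis $q\in L^\infty(0,1)$ is consistent with \eqref{con-q}.
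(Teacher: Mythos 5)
Your argument is correct, and it hinges on exactly the same key computation as the paper's proof: the bound $\Vert u_0\Vert_{W^1_{\mathcal{L}}}=\Vert\mathcal{L}^{1/2}u_0\Vert_{L^2}\leq\Vert\mathcal{L}u_0\Vert_{L^2}\leq\Vert u_0''\Vert_{L^2}+\Vert q\Vert_{L^\infty}\Vert u_0\Vert_{L^2}$ coming from $\lambda_n>1$ and Parseval, i.e.\ the analogue of \eqref{LA}. The organisation differs somewhat: you invoke Theorem \ref{Thm energy estimates 2} at $s=1$ as a black box and then convert the $W^1_{\mathcal{L}}$-norm on its right-hand side, whereas the paper re-runs the argument for $s=1$ — the energy identity giving \eqref{Estimates corollary}, then Duhamel's principle with $w$ and $v$ estimated by the data-only bound \eqref{ec1} of Corollary \ref{cor1} instead of the $W^{\pm s}_{\mathcal{L}}$ estimates of Theorem \ref{Thm: Energy estimates 1}. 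Your shortcut is legitimate precisely because the implicit constant in \eqref{Energy estimates} is independent of $q$ (it stems from the energy identity, from \eqref{est1}, whose constants are stated to be $q$-independent, and from Proposition \ref{Prop w^-s}), a point you should state explicitly when citing the theorem; what you gain is brevity, while the paper's re-derivation makes visible that in the $s=1$ case all intermediate steps can be expressed through $L^2$-norms of the data without passing through $W^{-1}_{\mathcal{L}}$. One shared caveat, present in the paper's proof as well: the identification $\sum_n|\lambda_nA_n|^2=\Vert\mathcal{L}u_0\Vert_{L^2}^2$ uses $\langle\mathcal{L}u_0,\phi_n\rangle=\lambda_n\langle u_0,\phi_n\rangle$, i.e.\ that $u_0$ belongs to the domain of $\mathcal{L}$ (Dirichlet conditions), which is not among the stated hypotheses; you flag this only parenthetically, but your treatment is at the same level of rigour as the paper on this point, so it is not a gap specific to your proof.
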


\begin{proof}
We consider $s=1$ and argue similarly as in the proof of Theorem \ref{Thm energy estimates 2}. Firstly, we get the estimates
    \begin{align}\label{Estimates corollary}
        \bigg\{ \Vert u_{t}(t,\cdot)\Vert_{L^2} , \Vert \mathcal{L}^{\frac{1}{2}}u(t,\cdot)\Vert_{L^2} ,& \Vert a^{\frac{1}{2}}(\cdot)u(t,\cdot)\Vert_{L^2} \bigg\}\\
        & \lesssim \Vert u_{1}\Vert_{L^2} + \Vert u_0\Vert_{W_{\mathcal{L}}^1} + \Vert a\Vert_{L^{\infty}}^{\frac{1}{2}}\Vert u_0\Vert_{L^2}\nonumber\\
        & \lesssim \Vert u_{1}\Vert_{L^2} + \Vert u^{''}_0\Vert_{L^2} + \Vert q\Vert_{L^{\infty}}\Vert u_0\Vert_{L^2} + \Vert a\Vert_{L^{\infty}}^{\frac{1}{2}}\Vert u_0\Vert_{L^2},\nonumber
    \end{align}
where we used that for $s=1$ and by arguing as in \eqref{21-1} and \eqref{21-2}, the term $\Vert u_0\Vert_{W_{\mathcal{L}}^1}$ can be estimated by
\begin{eqnarray*}
\Vert u_0\Vert^2_{W_{\mathcal{L}}^1} &=& \Vert\mathcal{L}^{\frac{1}{2}}u_0\Vert^2_{L^2} =\sum\limits_{n=1}^n\left|\sqrt{\lambda_n}A_n\right|^2\leq\sum\limits_{n=1}^n\left|\lambda_nA_n\right|^2 = \Vert\mathcal{L}u_0\Vert^2_{L^2}\\
&\leq& \Big(\Vert u^{''}_0\Vert_{L^2} + \Vert q\Vert_{L^{\infty}}\Vert u_0\Vert_{L^2}\Big)^2,
\end{eqnarray*}
    which is valid since $\lambda_n >1$ for $n=1,2,\cdots,$ resulting from \eqref{e-val}. Once again, to estimate $u$, we rewrite the equation in \eqref{S-L eq in corollary} as
    \begin{equation}\label{Eq Duham 1}
        \partial_{t}^{2}u(t,x) + \mathcal{L}u(t,x) = f(t,x), \quad (t,x)\in [0,T]\times (0,1),
    \end{equation}
    where
    \begin{equation*}
        f(t,x):=-a(x)u(t,x) - b(x)u_{t}(t,x),
    \end{equation*}
    and we apply Duhamel's principle to get the following representation for the solution
    \begin{equation}\label{Sol Duhamel cor}
        u(t,x)= w(t,x) + \int_0^t v(t,x;\tau)\d \tau,
    \end{equation}
    where $w(t,x)$ is the solution to the homogeneous problem
    \begin{equation}\label{Homog eqn Duhamel cor}
        \left\lbrace
        \begin{array}{l}
        w_{tt}(t,x)+\mathcal{L} w(t,x)=0, \quad (t,x)\in [0,T]\times (0,1),\\
        w(0,x)=u_{0}(x),\quad w_{t}(0,x)=u_{1}(x),\quad x\in(0,1),\\
        w(t,0)=w(t,1)=0, \quad t\in[0,T].
        \end{array}
        \right.
    \end{equation}
    and $v(t,x;s)$ solves
        \begin{equation}\label{Aux eqn Duhamel cor}
        \left\lbrace
        \begin{array}{l}
        v_{tt}(t,x;\tau)+\mathcal{L}v(t,x;\tau)=0, \quad(t,x)\in (\tau,T)\times (0,1),\\
        v(\tau,x;\tau)=0,\,\,\, v_{t}(\tau,x;\tau)=f(\tau,x),\quad x\in(0,1),\\
        v(t,0;\tau)=v(t,1;\tau)=0, \quad t\in [0,T].
        \end{array}
    \right.
    \end{equation}
    By using the estimate \eqref{ec1} from Corollary \ref{cor1} to estimate $w$ and $v$ in combination with \eqref{Estimates corollary} and proceeding as in Theorem \ref{Thm energy estimates 2} we easily get
    \begin{equation*}
        \Vert u(t,\cdot)\Vert_{L^2}\lesssim (1 + \Vert q\Vert_{L^{\infty}})(1 + \Vert a\Vert_{L^{\infty}})(1 + \Vert b\Vert_{L^{\infty}})\big[\Vert u_0\Vert_{L^2} + \Vert u''_0\Vert_{L^2} + \Vert u_{1}\Vert_{L^2} \big],
    \end{equation*}
    \begin{equation*}
        \Vert \mathcal{L}^{\frac{1}{2}}u(t,\cdot)\Vert_{L^2} \lesssim (1 + \Vert q\Vert_{L^{\infty}})(1 + \Vert a\Vert_{L^{\infty}})(1 + \Vert b\Vert_{L^{\infty}})\big[\Vert u_0\Vert_{L^2} + \Vert u''_0\Vert_{L^2} + \Vert u_{1}\Vert_{L^2} \big],
    \end{equation*}
    and
    \begin{equation*}
        \Vert u_t(t,\cdot)\Vert_{L^2} \lesssim (1 + \Vert q\Vert_{L^{\infty}})(1 + \Vert a\Vert_{L^{\infty}})(1 + \Vert b\Vert_{L^{\infty}})\big[\Vert u_0\Vert_{L^2} + \Vert u''_0\Vert_{L^2} + \Vert u_{1}\Vert_{L^2} \big],
    \end{equation*}
    ending the proof.
\end{proof}

\section{Very weak well-posedness}\label{VW well-posed}
For $s\geq 0$ and $T>0$, we consider the initial/boundary problem
\begin{equation}\label{S-L eq VWS}
     \left\lbrace
    \begin{array}{l}
    \partial_{t}^{2}u(t,x) + \mathcal{L}^su(t,x) + a(x)u(t,x) + b(x)u_{t}(t,x) =0, \quad (t,x)\in [0,T]\times (0,1),\\
    u(0,x)=u_{0}(x), \quad u_{t}(0,x)=u_{1}(x),\\
    u(t,0)=u(t,1)=0, \quad t\in [0,T].
    \end{array}
    \right.
\end{equation}
Now, we want to analyse solutions to \eqref{S-L eq VWS} for less regular coefficients $a,b,q$ and initial data $u_0,u_1$ having in mind distributions. To obtain the well-posedness in such cases, we will be using the concept of very weak solutions. To start with, for $\varepsilon\in(0,1]$ we consider families of regularised problems to \eqref{S-L eq VWS} arising from the regularising nets
\begin{equation}
    (a_\varepsilon)_{\varepsilon}=(a\ast \psi_\varepsilon)_{\varepsilon}, ~(b_\varepsilon)_{\varepsilon}=(b\ast \psi_\varepsilon)_{\varepsilon}, ~(q_\varepsilon)_{\varepsilon}=(q\ast \psi_\varepsilon)_{\varepsilon},\label{regularising nets1}
\end{equation}
and
\begin{equation}
    (u_{0,\varepsilon})_{\varepsilon}=(u_{0}\ast \psi_\varepsilon)_{\varepsilon}, ~ (u_{1,\varepsilon})_{\varepsilon}=(u_{1}\ast \psi_\varepsilon)_{\varepsilon},\label{regularising nets2}
\end{equation}
where
$\psi_{\varepsilon}(x)=\varepsilon^{-1}\psi(x/\varepsilon)$. The function $\psi$ is a Friedrichs-mollifier, i.e. $\psi\in C_{0}^{\infty}(\mathbb{R}^{d})$, $\psi\geq 0$ and $\int\psi =1$. We introduce the following definitions.
\begin{defn}[\textbf{Moderateness}]\label{Defn moderateness}
Let $X$ be a normed space of functions on $\mathbb R$ endowed with the norm $\Vert \cdot\Vert_X$.
\begin{enumerate}
    \item[\textsc{1.}] A net of functions $(f_\varepsilon)_{\varepsilon\in(0,1]}$ from $X$ is said to be $X$-moderate, if there exist $N\in\mathbb{N}_0$ such that
\begin{equation}\label{Defn moderateness1}
    \Vert f_\varepsilon\Vert_X \lesssim \varepsilon^{-N},
\end{equation}
and in particular
    \item[\textsc{2.}] a net of functions $(f_\varepsilon)_{\varepsilon\in(0,1]}$ from $L^2(0,1)$ is said to be $H^2$-moderate, if there exist $N\in\mathbb{N}_0$ such that
\begin{equation}\label{Defn moderateness1.1}
    \Vert f_\varepsilon\Vert_{L^2} + \Vert f''_\varepsilon\Vert_{L^2} \lesssim \varepsilon^{-N}.
\end{equation}
    \item[\textsc{3.}] For $T>0$, $s\geq 0$ and $q$ smooth enough, a net of functions $(u_\varepsilon(\cdot,\cdot))_{\varepsilon\in(0,1]}$ from $C([0,T];W_{\mathcal{L}}^{s}(0,1))\cap C^1([0,T];L^{2}(0,1))$ is said to be $C([0,T];W_{\mathcal{L}}^{s}(0,1))\cap C^1([0,T];L^{2}(0,1))$-moderate and we shortly write uniformly $s$-moderate, if there exist $N\in\mathbb{N}_0$ such that
    \begin{equation}\label{Defn moderateness2}
        \sup_{t\in [0,T]}\Vert u_\varepsilon(t,\cdot)\Vert_s \lesssim \varepsilon^{-N}.
    \end{equation}
    \item[\textsc{4.}] For $T>0$ and $s=1$. A net of functions $(u_\varepsilon(\cdot,\cdot))_{\varepsilon\in(0,1]}$ from $C([0,T];W_{\mathcal{L}}^{1}(0,1))\cap C^1([0,T];L^{2}(0,1))$ is said to be $C([0,T];W_{\mathcal{L}}^{1}(0,1))\cap C^1([0,T];L^{2}(0,1))$-moderate and we shortly write uniformly $1$-moderate, if there exist $N\in\mathbb{N}_0$ such that
    \begin{equation}\label{Defn moderateness2.1}
        \sup_{t\in [0,T]}\Vert u_\varepsilon(t,\cdot)\Vert_1 \lesssim \varepsilon^{-N}.
    \end{equation}
\end{enumerate}
\end{defn}
\begin{rem} We note that such assumptions are natural for distributional coefficients in the sense that regularisations of distributions are moderate. Precisely, by the structure theorems for distributions (see, e.g. \cite{FJ98}, \cite{Gar21}), we know that 
\begin{equation}\label{moder}
  \mathcal{D}'(0,1) \subset \{L^p(0,1) -\text{moderate families} \},  
\end{equation}
for any $p \in [1,\infty)$, which means that a solution to an initial/boundary problem may not exist in the sense of distributions, while it may exist in the set of $L^p$-moderate functions. 
\end{rem}
For instance, if we consider $f\in L^2(0,1)$, $f:(0,1)\to \mathbb{C}$. We define 
$$\Tilde{f}=\left\{\begin{array}{l}
    f, \text{ on }(0,1),  \\
    0,  \text{ on }\mathbb{R} \setminus (0,1).
\end{array}\right.$$
We have then $\Tilde{f}:\mathbb{R}\to \mathbb{C}$, and $\Tilde{f}\in \mathcal{E}'(\mathbb{R}).$

Let $\Tilde{f}_\varepsilon=\Tilde{f}*\psi_\varepsilon$ be obtained via convolution of $\Tilde{f}$ with a mollifying net $\psi_\varepsilon$, where 
$$\psi_\varepsilon(x)=\frac{1}{\varepsilon}\psi\left(\frac{x}{\varepsilon}\right),\quad \text{for}\,\, \psi\in C^\infty_0(\mathbb{R}),\, \int \psi=1. $$
Then the regularising net $(\Tilde{f}_\varepsilon)$ is $L^p$-moderate for any $p \in [1,\infty)$, and it approximates $f$ on $(0,1)$:
$$0\leftarrow \|\Tilde{f}_\varepsilon-\Tilde{f}\|^p_{L^p(\mathbb{R})}\approx \|\Tilde{f}_\varepsilon-f\|^p_{L^p(0,1)}+\|\Tilde{f}_\varepsilon\|^p_{L^p(\mathbb{R}\setminus (0,1))}.$$
In order to prove uniqueness of very weak solutions to \eqref{S-L eq VWS}, we will need the following definition.
\begin{defn}[\textbf{Negligibility}]\label{Defn negligibility}
    Let $X$ be a normed space with the norm $\Vert\cdot\Vert_X$. A net of functions $(f_\varepsilon)_{\varepsilon\in(0,1]}$ from $X$ is said to be $X$-negligible, if the estimate
    \begin{equation}\label{Negligibility formula}
        \Vert f_\varepsilon\Vert_X \leq C_k\varepsilon^k,
    \end{equation}
    is valid for all $k>0$, where $C_k$ may depend on $k$. We shortly write $\Vert f_\varepsilon\Vert_X \lesssim \varepsilon^k$. In particular, a net a functions $(f_\varepsilon)_{\varepsilon\in(0,1]}$ from $L^2$ such that $(f''_\varepsilon)_{\varepsilon\in(0,1]}\in L^2(0,1)$ is said to be $H^2$-negligible, if the estimate
    \begin{equation}\label{Negligibility formula1}
        \Vert f_\varepsilon\Vert_{L^2} + \Vert f''_\varepsilon\Vert_{L^2}
    \leq C_k\varepsilon^k,
    \end{equation}
    is valid for all $k>0$.
\end{defn}
Now, we are ready to introduce the notion of very weak solutions adapted to our problem. We treat two cases. Firstly, we treat the case when $s\geq 0$ and the potential $q$ is smooth enough. We also treat the case when $s=1$ where $q$ is allowed to be singular.

\subsection{Case 1: $s\geq 0$} 
We should mention that the reason we consider here regular potential $q$, lies in the fact that the estimates obtained in Theorem \ref{Thm energy estimates 2} depend on $\mathcal{L}$.
\begin{defn}[Very weak solution]\label{D2}
Let $a,b,u_0,u_1\in \mathcal{D}'(0,1)$ be such that $a,b$ are non-negative (in the sense of their representatives) and assume $q\in L^\infty(0,1)$ is real. A net of functions $(u_\varepsilon)_{\varepsilon\in (0,1]}$ is said to be a very weak solution to the initial/boundary problem  \eqref{S-L eq VWS} if there exist non-negative $L^\infty$-moderate regularisations $(a_{\varepsilon})_{\varepsilon}$ and $(b_\varepsilon)_{\varepsilon}$ of $a$ and $b$, a $ W_{\mathcal{L}}^{s}(0,1)$-moderate regularisation $(u_{0,\varepsilon})_{\varepsilon}$ of $u_0$ and an $L^2(0,1)$-moderate regularisation $(u_{1,\varepsilon})_{\varepsilon}$ of $u_1$ such that the family $(u_\varepsilon)_{\varepsilon}$ solves the $\varepsilon$-dependent problems
\begin{equation}\label{vw1}
\left\{\begin{array}{l}\partial^2_t u_\varepsilon(t,x)+\mathcal{L}^su_\varepsilon(t,x)+a_{\varepsilon}(x)u_\varepsilon(t,x)+b_\varepsilon(x)\partial_tu_\varepsilon(t,x)=0,\quad (t,x)\in [0,T]\times(0,1),\\
u_\varepsilon(0,x)=u_{0,\varepsilon}(x),\quad \partial_tu_\varepsilon(0,x)=u_{1,\varepsilon}(x),\quad x\in (0,1), \\
u_\varepsilon(t,0)=0=u_\varepsilon(t,1), \quad t\in[0,T],
\end{array}\right.\end{equation}
for any $\varepsilon\in (0,1]$, and $(u_\varepsilon)_{\varepsilon}$ is uniformly $s$-moderate.
\end{defn}

\begin{thm}[Existence]\label{Thm existence 1}
    Let $a,b,u_0,u_1$ and $q$ as in Definition \ref{D2}. Then the initial/boundary problem \eqref{S-L eq VWS} has a very weak solution.
\end{thm}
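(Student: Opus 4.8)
The plan is to obtain the very weak solution directly from the classical well-posedness of the regularised problems \eqref{vw1}, exploiting the explicit coefficient dependence in Theorem \ref{Thm energy estimates 2}. First I fix a Friedrichs mollifier $\psi$ and, after extending $a,b,u_0,u_1$ by zero outside $(0,1)$ as in the discussion following \eqref{moder}, I form the regularising nets $(a_\varepsilon)_\varepsilon,(b_\varepsilon)_\varepsilon,(u_{0,\varepsilon})_\varepsilon,(u_{1,\varepsilon})_\varepsilon$ as in \eqref{regularising nets1}--\eqref{regularising nets2}; no regularisation of $q$ is needed since here $q\in L^\infty(0,1)$, so $\mathcal{L}$ and $\mathcal{L}^{s/2}$ are fixed operators. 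Since $\psi\geq 0$, the nets $(a_\varepsilon)_\varepsilon,(b_\varepsilon)_\varepsilon$ may be taken non-negative, and by the structure theorems for distributions (as recalled around \eqref{moder}) they are $L^\infty$-moderate: there are $N_a,N_b\in\mathbb N_0$ with $\|a_\varepsilon\|_{L^\infty}\lesssim\varepsilon^{-N_a}$ and $\|b_\varepsilon\|_{L^\infty}\lesssim\varepsilon^{-N_b}$. Similarly $(u_{1,\varepsilon})_\varepsilon$ is $L^2$-moderate, $\|u_{1,\varepsilon}\|_{L^2}\lesssim\varepsilon^{-N_1}$, and $(u_{0,\varepsilon})_\varepsilon$ is chosen $W^s_{\mathcal{L}}(0,1)$-moderate, $\|u_{0,\varepsilon}\|_{W^s_{\mathcal{L}}}\lesssim\varepsilon^{-N_0}$, with the approximating net landing in $C^\infty_{\mathcal{L}}(0,1)$ (so that, in particular, the Dirichlet conditions are satisfied).

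Next, for each fixed $\varepsilon\in(0,1]$ the data fulfil the hypotheses of Theorem \ref{Thm energy estimates 2}: $a_\varepsilon,b_\varepsilon\in L^\infty(0,1)$ are non-negative, $q\in L^\infty(0,1)$ is real, $u_{0,\varepsilon}\in W^s_{\mathcal{L}}(0,1)$ and $u_{1,\varepsilon}\in L^2(0,1)$. Hence \eqref{vw1} has a unique solution $u_\varepsilon\in C([0,T];W^s_{\mathcal{L}}(0,1))\cap C^1([0,T];L^2(0,1))$, and the energy estimate \eqref{Energy estimates} gives
\[
\sup_{t\in[0,T]}\Big\{\|u_\varepsilon(t,\cdot)\|_{L^2},\ \|\mathcal{L}^{\frac{s}{2}}u_\varepsilon(t,\cdot)\|_{L^2},\ \|\partial_t u_\varepsilon(t,\cdot)\|_{L^2}\Big\}\lesssim (1+\|a_\varepsilon\|_{L^\infty}+\|b_\varepsilon\|_{L^\infty})\big[\|u_{0,\varepsilon}\|_{W^s_{\mathcal{L}}}+\|u_{1,\varepsilon}\|_{L^2}\big],
\]
with the implicit constant depending only on $T$ and $s$, hence independent of $\varepsilon$. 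Inserting the moderateness bounds from the previous step and using $\varepsilon\in(0,1]$ yields
\[
\sup_{t\in[0,T]}\|u_\varepsilon(t,\cdot)\|_s\lesssim (1+\varepsilon^{-N_a}+\varepsilon^{-N_b})\big(\varepsilon^{-N_0}+\varepsilon^{-N_1}\big)\lesssim \varepsilon^{-N},\qquad N:=\max\{N_a,N_b\}+\max\{N_0,N_1\},
\]
so $(u_\varepsilon)_\varepsilon$ is uniformly $s$-moderate. Combined with the properties of the regularising nets established above, this verifies all requirements of Definition \ref{D2}, so $(u_\varepsilon)_\varepsilon$ is a very weak solution of \eqref{S-L eq VWS}.

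The routine part is the application of Theorem \ref{Thm energy estimates 2} and the bookkeeping with the polynomial-in-$\varepsilon^{-1}$ bounds. The point that deserves genuine care is the construction in the first step: producing a regularising net for $u_0$ that is moderate in the $\mathcal{L}$-adapted Sobolev space $W^s_{\mathcal{L}}(0,1)$ and lands in the space on which Theorem \ref{Thm energy estimates 2} is formulated, so that the Dirichlet boundary behaviour is respected. A plain mollification of $u_0$ restricted to $(0,1)$ need not vanish at the endpoints and therefore need not even belong to $W^s_{\mathcal{L}}(0,1)$ for $s\geq 1/2$; one remedies this by incorporating a spectral cut-off at height $\varepsilon^{-2}$ (equivalently, truncating the eigenfunction expansion), which keeps the net in $C^\infty_{\mathcal{L}}(0,1)$, preserves convergence to $u_0$, and retains moderateness.
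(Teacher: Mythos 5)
Your proof is correct and follows essentially the same route as the paper: for each fixed $\varepsilon$ solve the regularised problem via Theorem \ref{Thm energy estimates 2} and combine the energy estimate \eqref{Energy estimates} with the moderateness of the regularised coefficients and data to conclude that $(u_\varepsilon)_\varepsilon$ is uniformly $s$-moderate, hence a very weak solution in the sense of Definition \ref{D2}. Your closing remark on how to actually produce a $W^s_{\mathcal{L}}$-moderate regularisation of $u_0$ (spectral cut-off rather than plain mollification, so that the Dirichlet behaviour is respected for $s\geq 1/2$) addresses a point the paper takes for granted; it is a sensible supplement rather than a different argument.
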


\begin{proof}
    Since $a,b,u_0,u_1$ are moderate, then, there exists $N_1, N_2, N_3, N_4 \in\mathbb N$, such that
    \begin{equation*}
        \Vert a_{\varepsilon}\Vert_{L^{\infty}} \lesssim \varepsilon^{-N_1},\quad \Vert b_{\varepsilon}\Vert_{L^{\infty}} \lesssim \varepsilon^{-N_2},
    \end{equation*}
    and 
    \begin{equation*}
        \Vert u_{0,\varepsilon}\Vert_{W_{\mathcal{L}}^{s}} \lesssim \varepsilon^{-N_3},\quad \Vert u_{1,\varepsilon}\Vert_{L^2} \lesssim \varepsilon^{-N_4}.
    \end{equation*}
    Using the estimate \eqref{Energy estimates}, we get
    \begin{equation*}
        \Vert u_{\varepsilon}(t,\cdot)\Vert_{1} \lesssim \varepsilon^{-\max \{N_1,N_2\}-\max \{N_3,N_4\}},
    \end{equation*}
    for all $t\in[0,T]$. Thus, $(u_{\varepsilon})_{\varepsilon}$ is uniformly $s$-moderate and the existence of a very weak solution follows, ending the proof.
\end{proof}
The uniqueness of the very weak solution is proved in the sense of the following definition.
\begin{defn}[Uniqueness of very weak solutions]\label{D4}
We say that the initial/boundary problem \eqref{S-L eq VWS} has a unique very weak solution, if
for any non-negative $L^\infty$-moderate nets $(a_\varepsilon)_{\varepsilon}$, $(\Tilde{a}_\varepsilon)_{\varepsilon}$, $(b_\varepsilon)_{\varepsilon}$, $(\Tilde{b}_\varepsilon)_{\varepsilon}$, such that $(a_\varepsilon-\Tilde{a}_\varepsilon)_{\varepsilon}$ and $(b_\varepsilon-\Tilde{b}_\varepsilon)_{\varepsilon}$ are $L^\infty$-negligible; for any $W_{\mathcal{L}}^{s}$-moderate regularisations $(u_{0,\varepsilon},\,\Tilde{u}_{0,\varepsilon})_{\varepsilon}$, such that $(u_{0,\varepsilon}-\Tilde{u}_{0,\varepsilon})_{\varepsilon}$ is $W_{\mathcal{L}}^{s}$-negligible and for any $L^2$-moderate regularisations $(u_{1,\varepsilon},\,\Tilde{u}_{1,\varepsilon})_{\varepsilon}$, such that $(u_{1,\varepsilon}-\Tilde{u}_{1,\varepsilon})_{\varepsilon}$ is $L^2$-negligible, we have that $(u_\varepsilon-\Tilde{u}_\varepsilon)_{\varepsilon}$ is $L^2$-negligible for all $t\in[0,T]$, where $(u_{\varepsilon})_{\varepsilon}$ and $(\Tilde{u}_{\varepsilon})_{\varepsilon}$ are the families of solutions to the corresponding regularised problems
\begin{equation}\label{un1}
\left\{\begin{array}{l}\partial^2_t u_\varepsilon(t,x)+\mathcal{L}^su_\varepsilon(t,x)+a_{\varepsilon}(x)u_\varepsilon(t,x)+b_\varepsilon(x)\partial_tu_\varepsilon(t,x)=0,\quad (t,x)\in [0,T]\times(0,1),\\
u_\varepsilon(0,x)=u_{0,\varepsilon}(x),\quad \partial_tu_\varepsilon(0,x)=u_{1,\varepsilon}(x),\quad x\in (0,1), \\
u_\varepsilon(t,0)=0=u_\varepsilon(t,1), \quad t\in[0,T],\end{array}\right.
\end{equation}
and
\begin{equation}\label{un2}
    \left\{\begin{array}{l}\partial^2_t \Tilde{u}_\varepsilon(t,x)+\mathcal{L}^s \Tilde{u}_\varepsilon(t,x)+\Tilde{a}_\varepsilon(x)\Tilde{u}_\varepsilon(t,x)+\Tilde{b}_\varepsilon(x) \partial_t\Tilde{u}_\varepsilon(t,x)=0,\quad (t,x)\in [0,T]\times(0,1),\\
 \Tilde{u}_\varepsilon(0,x)=\Tilde{u}_{0,\varepsilon}(x),\quad \partial_t \Tilde{u}_\varepsilon(0,x)=\Tilde{u}_{1,\varepsilon}(x), \quad x\in (0,1), \\
\Tilde{u}_\varepsilon(t,0)=0=\Tilde{u}_\varepsilon(t,1), \quad t\in[0,T],
\end{array}\right.
\end{equation}
respectively.
\end{defn}

\begin{thm}[Uniqueness]\label{Thm1 uniqueness}
    Let $a,b,u_0,u_1\in \mathcal{D}'(0,1)$ be such that $a,b$ are non-negative, and assume $q\in L^\infty(0,1)$ is real. In the background of Theorem \ref{Thm existence 1}, the very weak solution to the initial/boundary problem \eqref{S-L eq VWS} is unique.
\end{thm}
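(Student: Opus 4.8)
The strategy is the standard one for very weak uniqueness: write the equation satisfied by the difference $U_\varepsilon := u_\varepsilon - \tilde u_\varepsilon$, treat it as an inhomogeneous version of the same wave equation with right-hand side built out of negligible nets and moderate solution nets, and then apply the energy estimate of Theorem \ref{Thm energy estimates 2} (in its Duhamel form) to conclude that $\|U_\varepsilon(t,\cdot)\|_{L^2}$ is negligible uniformly in $t\in[0,T]$.

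\begin{proof}
Set $U_\varepsilon := u_\varepsilon - \tilde u_\varepsilon$. Subtracting \eqref{un2} from \eqref{un1}, one finds that $U_\varepsilon$ solves
\begin{equation*}
\left\{\begin{array}{l}
\partial_t^2 U_\varepsilon(t,x)+\mathcal{L}^s U_\varepsilon(t,x)+a_\varepsilon(x)U_\varepsilon(t,x)+b_\varepsilon(x)\partial_t U_\varepsilon(t,x)=f_\varepsilon(t,x),\quad (t,x)\in[0,T]\times(0,1),\\
U_\varepsilon(0,x)=u_{0,\varepsilon}(x)-\tilde u_{0,\varepsilon}(x),\quad \partial_t U_\varepsilon(0,x)=u_{1,\varepsilon}(x)-\tilde u_{1,\varepsilon}(x),\quad x\in(0,1),\\
U_\varepsilon(t,0)=0=U_\varepsilon(t,1),\quad t\in[0,T],
\end{array}\right.
\end{equation*}
where the source term is
\begin{equation*}
f_\varepsilon(t,x):=\big(\tilde a_\varepsilon(x)-a_\varepsilon(x)\big)\tilde u_\varepsilon(t,x)+\big(\tilde b_\varepsilon(x)-b_\varepsilon(x)\big)\partial_t\tilde u_\varepsilon(t,x).
\end{equation*}
The key observation is that $f_\varepsilon$ is $L^2$-negligible uniformly in $t$: by hypothesis $(a_\varepsilon-\tilde a_\varepsilon)_\varepsilon$ and $(b_\varepsilon-\tilde b_\varepsilon)_\varepsilon$ are $L^\infty$-negligible, while $\tilde u_\varepsilon$ and $\partial_t\tilde u_\varepsilon$ are moderate (the net $(\tilde u_\varepsilon)_\varepsilon$ being a very weak solution, hence uniformly $s$-moderate, so in particular $\sup_{t}\|\tilde u_\varepsilon(t,\cdot)\|_{L^2}\lesssim\varepsilon^{-N}$ and $\sup_{t}\|\partial_t\tilde u_\varepsilon(t,\cdot)\|_{L^2}\lesssim\varepsilon^{-N}$ for some $N$). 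Since a negligible net times a moderate net is again negligible, $\sup_{t\in[0,T]}\|f_\varepsilon(t,\cdot)\|_{L^2}\lesssim\varepsilon^k$ for every $k>0$.

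Now I apply Duhamel's principle (Proposition \ref{Prop Duhamel}) to the problem for $U_\varepsilon$, exactly as in the proof of Theorem \ref{Thm energy estimates 2}: $U_\varepsilon = w_\varepsilon + \int_0^t v_\varepsilon(t,\cdot;\tau)\,\d\tau$, where $w_\varepsilon$ solves the homogeneous equation with data $(u_{0,\varepsilon}-\tilde u_{0,\varepsilon},\,u_{1,\varepsilon}-\tilde u_{1,\varepsilon})$ and $v_\varepsilon(\cdot,\cdot;\tau)$ solves the homogeneous equation with data $(0,f_\varepsilon(\tau,\cdot))$ started at time $\tau$. Applying the $L^2$-estimate from Theorem \ref{Thm energy estimates 2} to $w_\varepsilon$ gives
\begin{equation*}
\|w_\varepsilon(t,\cdot)\|_{L^2}\lesssim (1+\|a_\varepsilon\|_{L^\infty}+\|b_\varepsilon\|_{L^\infty})\big[\|u_{0,\varepsilon}-\tilde u_{0,\varepsilon}\|_{W^s_{\mathcal L}}+\|u_{1,\varepsilon}-\tilde u_{1,\varepsilon}\|_{L^2}\big],
\end{equation*}
in which the prefactor is $\varepsilon$-moderate (being $L^\infty$-moderate) and the bracket is negligible by hypothesis; hence $\sup_t\|w_\varepsilon(t,\cdot)\|_{L^2}\lesssim\varepsilon^k$ for all $k$. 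Likewise, each $v_\varepsilon(t,\cdot;\tau)$ is the solution of the homogeneous problem with zero first datum and second datum $f_\varepsilon(\tau,\cdot)$, so Theorem \ref{Thm energy estimates 2} together with the Sobolev embedding $\|f_\varepsilon(\tau,\cdot)\|_{W^{-s}_{\mathcal L}}\lesssim\|f_\varepsilon(\tau,\cdot)\|_{L^2}$ (Proposition \ref{Prop w^-s}, noting $f_\varepsilon(\tau,\cdot)$ vanishes at the endpoints because $\tilde u_\varepsilon$ and $\partial_t\tilde u_\varepsilon$ do) yields $\|v_\varepsilon(t,\cdot;\tau)\|_{L^2}\lesssim(1+\|a_\varepsilon\|_{L^\infty}+\|b_\varepsilon\|_{L^\infty})\|f_\varepsilon(\tau,\cdot)\|_{L^2}$, which is again negligible uniformly in $\tau\in[0,t]$. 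Integrating over $\tau\in[0,t]$ and using $t\le T$ keeps negligibility, so $\sup_{t\in[0,T]}\|U_\varepsilon(t,\cdot)\|_{L^2}\lesssim\varepsilon^k$ for every $k>0$, which is precisely the assertion that the very weak solution is unique in the sense of Definition \ref{D4}.

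The only point requiring care — and the main technical obstacle — is the bookkeeping on the powers of $\varepsilon$: one must check that the moderate prefactors $(1+\|a_\varepsilon\|_{L^\infty}+\|b_\varepsilon\|_{L^\infty})$ and the moderate bounds on $\tilde u_\varepsilon,\partial_t\tilde u_\varepsilon$ all carry a fixed negative power $\varepsilon^{-N}$ independent of $k$, so that multiplying by the negligible factors $\varepsilon^{k+N}$ (available for every $k$) indeed produces $\varepsilon^k$. Since all the nets involved are moderate with a common such $N$, and negligibility is preserved under multiplication by moderate nets and under integration in $t$ over the fixed interval $[0,T]$, this goes through, completing the proof.
\end{proof}
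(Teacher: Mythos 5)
Your proof is correct and follows essentially the same route as the paper: the same difference net $U_\varepsilon$ with the same source $f_\varepsilon$, the same Duhamel decomposition with respect to the full regularised operator, the energy estimate of Theorem \ref{Thm energy estimates 2} applied to both pieces, and the same moderate-times-negligible bookkeeping. The only cosmetic difference is your appeal to the embedding $\Vert f_\varepsilon(\tau,\cdot)\Vert_{W^{-s}_{\mathcal L}}\lesssim\Vert f_\varepsilon(\tau,\cdot)\Vert_{L^2}$, which is unnecessary here since the estimate \eqref{Energy estimates} already controls the solution by the $L^2$-norm of the second datum.
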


\begin{proof}
    Let $(u_{\varepsilon})_{\varepsilon}$ and $(\Tilde{u}_{\varepsilon})_{\varepsilon}$ be the nets of solutions to \eqref{un1} and \eqref{un2} corresponding to the families of regularised coefficients and initial data $\big( a_\varepsilon,b_\varepsilon,u_{0,\varepsilon},u_{1,\varepsilon} \big)_\varepsilon$ and $\big( \Tilde{a}_\varepsilon,\Tilde{b}_\varepsilon,\Tilde{u}_{0,\varepsilon},\Tilde{u}_{1,\varepsilon} \big)_\varepsilon$ respectively. Assume that the nets $(a_\varepsilon-\Tilde{a}_\varepsilon)_{\varepsilon}$ and $(b_\varepsilon-\Tilde{b}_\varepsilon)_{\varepsilon}$ are $L^\infty$-negligible; $(u_{0,\varepsilon}-\Tilde{u}_{0,\varepsilon})_{\varepsilon}$ is $W_{\mathcal{L}}^{s}$-negligible and $(u_{1,\varepsilon}-\Tilde{u}_{1,\varepsilon})_{\varepsilon}$ is $L^2$-negligible. Let us introduce
    \begin{equation*}
        U_{\varepsilon}(t,x):=u_{\varepsilon}(t,x)-\Tilde{u}_{\varepsilon}(t,x),
    \end{equation*}
    then, $U_{\varepsilon}(t,x)$ is solution to
    \begin{equation}\label{Equation1 U}
        \left\{\begin{array}{l}
        \partial_{t}^2U_{\varepsilon}(t,x) + \mathcal{L}^{s}U_{\varepsilon}(t,x) + a_{\varepsilon}(x)U_{\varepsilon}(t,x) + b_{\varepsilon}(x)\partial_{t}U_{\varepsilon}(t,x)=f_{\varepsilon}(t,x),\\
        U_{\varepsilon}(0,x)=(u_{0,\varepsilon}-\Tilde{u}_{0,\varepsilon})(x),\quad \partial_{t}U_{\varepsilon}(0,x)=(u_{1,\varepsilon}-\Tilde{u}_{1,\varepsilon})(x),\\
        U_{\varepsilon}(t,0)=U_{\varepsilon}(t,1)=0, 
        \end{array}\right.
    \end{equation}
    for $(t,x)\in [0,T]\times(0,1)$, where
    \begin{equation*}
        f_{\varepsilon}(t,x):=\big(\Tilde{a}_{\varepsilon}(x)-a_{\varepsilon}(x)\big)\Tilde{u}_{\varepsilon}(t,x) + \big(\Tilde{b}_{\varepsilon}(x)-b_{\varepsilon}(x)\big)\partial_{t}\Tilde{u}_{\varepsilon}(t,x).
    \end{equation*}
    By using Duhamel's principle, $U_{\varepsilon}(t,x)$ is given by
    \begin{equation}\label{Duhamel1 solution}
        U_{\varepsilon}(t,x) = W_{\varepsilon}(t,x) + \int_{0}^{t}V_{\varepsilon}(t,x;\tau)\d \tau,
    \end{equation}
    where $W_{\varepsilon}(t,x)$ is the solution to the problem
    \begin{equation}\label{Equation1 W}
        \left\{
        \begin{array}{l}
        \partial_{t}^2W_{\varepsilon}(t,x) + \mathcal{L}^{s}W_{\varepsilon}(t,x) + a_{\varepsilon}(x)W_{\varepsilon}(t,x) + b_{\varepsilon}(x)\partial_{t}W_{\varepsilon}(t,x)=0,\\
        W_{\varepsilon}(0,x)=(u_{0,\varepsilon}-\Tilde{u}_{0,\varepsilon})(x),\quad \partial_{t}U_{\varepsilon}(0,x)=(u_{1,\varepsilon}-\Tilde{u}_{1,\varepsilon})(x),\\
        W_{\varepsilon}(t,0)=W_{\varepsilon}(t,1)=0, 
        \end{array}\right.
    \end{equation}
    for $(t,x)\in [0,T]\times(0,1)$, and $V_{\varepsilon}(t,x;s)$ solves
    \begin{equation}\label{Equation1 V}
        \left\{
        \begin{array}{l}
        \partial_{t}^2V_{\varepsilon}(t,x;\tau) + \mathcal{L}^{s}V_{\varepsilon}(t,x;\tau) + a_{\varepsilon}(x)V_{\varepsilon}(t,x;\tau) + b_{\varepsilon}(x)\partial_{t}V_{\varepsilon}(t,x;\tau)=0,\\
        V_{\varepsilon}(\tau,x;\tau)=0,\quad \partial_{t}V_{\varepsilon}(\tau,x;\tau)=f_{\varepsilon}(\tau,x),\\
        V_{\varepsilon}(t,0;\tau)=V_{\varepsilon}(t,1;\tau)=0, 
        \end{array}\right.
    \end{equation}
    for $(t,x)\in [\tau,T]\times(0,1)$ and $s\in [0,T]$.
    By taking the $L^2$-norm in both sides in \eqref{Duhamel1 solution} we get
    \begin{equation}\label{Duhamel1 solution estimate}
        \Vert U_{\varepsilon}(t,\cdot)\Vert_{L^2} \leq \Vert W_{\varepsilon}(t,\cdot)\Vert_{L^2} + \int_{0}^{t}\Vert V_{\varepsilon}(t,\cdot;\tau)\Vert_{L^2}\d \tau.
    \end{equation}
    Using \eqref{Energy estimates} to estimate $\Vert W_{\varepsilon}(t,\cdot)\Vert_{L^2}$ and $\Vert V_{\varepsilon}(t,\cdot;\tau)\Vert_{L^2}$, we get
    \begin{equation*}
        \Vert W_{\varepsilon}(t,\cdot)\Vert_{L^2} \lesssim \Big(1+\Vert a_{\varepsilon}\Vert_{L^{\infty}}+\Vert b_{\varepsilon}\Vert_{L^{\infty}}\Big)\bigg[\Vert u_{0,\varepsilon}-\Tilde{u}_{0,\varepsilon}\Vert_{W_{\mathcal{L}}^{s}} + \Vert u_{1,\varepsilon}-\Tilde{u}_{1,\varepsilon}\Vert_{L^2}\bigg],
    \end{equation*}
    and
    \begin{equation*}
        \Vert V_{\varepsilon}(t,\cdot;\tau)\Vert_{L^2} \lesssim \Big(1+\Vert a_{\varepsilon}\Vert_{L^{\infty}}+\Vert b_{\varepsilon}\Vert_{L^{\infty}}\Big)\bigg[\Vert f_{\varepsilon}(\tau,\cdot)\Vert_{L^2}\bigg].
    \end{equation*}
    It follows from \eqref{Duhamel1 solution estimate} that
    \begin{align}\label{Estimate U}
        \Vert U_{\varepsilon}(t,\cdot)\Vert_{L^2} \lesssim \Big(1+\Vert a_{\varepsilon}\Vert_{L^{\infty}}+\Vert b_{\varepsilon}\Vert_{L^{\infty}}\Big)&\bigg[\Vert u_{0,\varepsilon}-\Tilde{u}_{0,\varepsilon}\Vert_{W_{\mathcal{L}}^{s}}\\
        & + \Vert u_{1,\varepsilon}-\Tilde{u}_{1,\varepsilon}\Vert_{L^2} + \int_0^T \Vert f_{\varepsilon}(\tau,\cdot)\Vert_{L^2}\d \tau \bigg],
    \end{align}
    since $t\in [0,T]$. Let us estimate $\Vert f_{\varepsilon}(\tau,\cdot)\Vert_{L^2}$. We have,
    \begin{align}\label{Estimate f_epsilon}
        \Vert f_{\varepsilon}(\tau,\cdot)\Vert_{L^2} & \leq \Vert (\Tilde{a}_{\varepsilon}(\cdot)-a_{\varepsilon}(\cdot))\Tilde{u}_{\varepsilon}(\tau,\cdot)\Vert_{L^2} + \Vert(\Tilde{b}_{\varepsilon}(\cdot)-b_{\varepsilon}(\cdot))\partial_{t}\Tilde{u}_{\varepsilon}(\tau,\cdot)\Vert_{L^2}\\
        & \nonumber \lesssim \Vert \Tilde{a}_{\varepsilon}-a_{\varepsilon}\Vert_{L^{\infty}}\Vert\Tilde{u}_{\varepsilon}(\tau,\cdot)\Vert_{L^2 } + \Vert\Tilde{b}_{\varepsilon}-b_{\varepsilon}\Vert_{L^{\infty}}\Vert\partial_{t}\Tilde{u}_{\varepsilon}(\tau,\cdot)\Vert_{L^2 }.
    \end{align}
    Thus, we get
    \begin{align}\label{Estimate U 1}
        \Vert U_{\varepsilon}(t,\cdot)\Vert_{L^2} \lesssim & \Big(1+\Vert a_{\varepsilon}\Vert_{L^{\infty}}+\Vert b_{\varepsilon}\Vert_{L^{\infty}}\Big)\bigg[\Vert u_{0,\varepsilon}-\Tilde{u}_{0,\varepsilon}\Vert_{W_{\mathcal{L}}^{s}} + \Vert u_{1,\varepsilon}-\Tilde{u}_{1,\varepsilon}\Vert_{L^2}\\
        & \nonumber+ \Vert \Tilde{a}_{\varepsilon}-a_{\varepsilon}\Vert_{L^{\infty}}\int_0^T \Vert\Tilde{u}_{\varepsilon}(\tau,\cdot)\Vert_{L^2 }\d \tau + \Vert \Tilde{b}_{\varepsilon}-b_{\varepsilon}\Vert_{L^{\infty}}\int_0^T \Vert\partial_t\Tilde{u}_{\varepsilon}(\tau,\cdot)\Vert_{L^2 }\d \tau\bigg].
    \end{align}
    Now, using the fact that $(a_{\varepsilon})_{\varepsilon}$ and $(b_{\varepsilon})_{\varepsilon}$ are $L^{\infty}$-moderate by assumption, and that the net $(\Tilde{u}_{\varepsilon})_{\varepsilon}$ is uniformly $s$-moderate being a very weak solution to \eqref{S-L eq VWS} this on one hand and from the other hand that the nets $(a_\varepsilon-\Tilde{a}_\varepsilon)_{\varepsilon}$ and $(b_\varepsilon-\Tilde{b}_\varepsilon)_{\varepsilon}$ are $L^\infty$-negligible; $(u_{0,\varepsilon}-\Tilde{u}_{0,\varepsilon})_{\varepsilon}$ is $W_{\mathcal{L}}^{s}$-negligible and $(u_{1,\varepsilon}-\Tilde{u}_{1,\varepsilon})_{\varepsilon}$ is $L^2$-negligible, it follows from \eqref{Estimate U 1} that
    \begin{equation*}
        \Vert U_{\varepsilon}(t,\cdot)\Vert_{L^2} \lesssim \varepsilon^{k},
    \end{equation*}
    for all $k>0$. This completes the proof.    
\end{proof}

\subsection{Case 2: $s=1$} In this case, the energy estimates obtained in Theorem \ref{Thm energy estimates 2} are expressed in terms of all appearing coefficients and initial data including the potential $q$ as it was shown in Corollary \ref{cor2}. this allows us to consider singular potentials. So, the problem to be analysed here is the initial/boundary problem 
\begin{equation}\label{eq_s=0}
\left\lbrace
\begin{array}{l}
\partial_{t}^{2}u(t,x) +\mathcal{L}_qu(t,x) ++ a(x)u(t,x) + b(x)u_{t}(t,x) =0,\quad (t,x)\in [0,T]\times (0,1),\\
u(0,x)=u_{0}(x), \quad u_{t}(0,x)=u_{1}(x),\quad x\in(0,1),\\
u(t,0)=u(t,1)=0, \quad t\in [0,T],
\end{array}
\right.   
\end{equation}
where
\begin{equation}
    \mathcal{L}_qu(t,x):=-\partial_{x}^{2}u(t,x) + q(x)u(t,x).
\end{equation}
Here, the coefficients $a$, $b$ and the initial data $u_0,u_1$ together with the potential $q$ are assumed to be distributions on $(0,1)$. Let us first adapt our previous definitions to this case.
\begin{defn}[Very weak solution]\label{Defn VWS case-2}
Let $a,b,q,u_0,u_1\in \mathcal{D}'(0,1)$ be such that $a,b$ are non-negative and assume that $q\in L^\infty(0,1)$ is real. A net of functions $(u_\varepsilon)_{\varepsilon\in (0,1]}$ is said to be a very weak solution to the initial/boundary problem  \eqref{eq_s=0} if there exist non-negative $L^\infty$-moderate regularisations $(a_\varepsilon)_{\varepsilon}$, $(b_\varepsilon)_{\varepsilon}$ of $a,b$, and $(q_\varepsilon)_{\varepsilon}$ of $q$, an $H^2$-moderate regularisation $(u_{0,\varepsilon})_{\varepsilon}$ of $u_0$ and an $L^2$-moderate regularisation $(u_{1,\varepsilon})_{\varepsilon}$ of $u_1$ such that the family $(u_\varepsilon)_{\varepsilon}$ solves the $\varepsilon$-dependent problems
\begin{equation}\label{vw1-case2}
\left\{\begin{array}{l}\partial^2_t u_\varepsilon(t,x)+ \mathcal{L}_{q_\varepsilon}u_\varepsilon(t,x) + a_{\varepsilon}(x)u_\varepsilon(t,x)+b_\varepsilon(x)\partial_tu_\varepsilon(t,x)=0,\quad (t,x)\in [0,T]\times(0,1),\\
u_\varepsilon(0,x)=u_{0,\varepsilon}(x),\quad \partial_tu_\varepsilon(0,x)=u_{1,\varepsilon}(x),\quad x\in (0,1), \\
u_\varepsilon(t,0)=0=u_\varepsilon(t,1), \quad t\in[0,T],
\end{array}\right.\end{equation}
for any $\varepsilon\in (0,1]$ and $(u_\varepsilon)_{\varepsilon}$ is $1$-moderate.
\end{defn}
\begin{thm}[Existence]\label{Ext_s=0}
Let $a,b,q,u_0,u_1$ be as in Definition \ref{Defn VWS case-2}. Assume that there exist non-negative $L^\infty$-moderate regularisations $(a_\varepsilon)_{\varepsilon}$, $(b_\varepsilon)_{\varepsilon}$ of $a,b$, and $(q_\varepsilon)_{\varepsilon}$ of $q$, an $H^2$-moderate regularisation $(u_{0,\varepsilon})_{\varepsilon}$ of $u_0$ and an $L^2$-moderate regularisation $(u_{1,\varepsilon})_{\varepsilon}$ of $u_1$. Then, the initial/boundary problem  \eqref{eq_s=0} has a very weak solution.
\end{thm}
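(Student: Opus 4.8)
The plan is to follow the same soft packaging argument as in the proof of Theorem~\ref{Thm existence 1}, with the only change that the role of Theorem~\ref{Thm energy estimates 2} is now played by Corollary~\ref{cor2}; this is precisely the version of the energy estimates in which the right-hand side is controlled by $\Vert q\Vert_{L^\infty}$ together with $\Vert u_0\Vert_{L^2}$, $\Vert u_0''\Vert_{L^2}$ and $\Vert u_1\Vert_{L^2}$, with no dependence on $\mathcal L$ — which is exactly what makes the present case accessible for singular $q$. Since, by hypothesis, $(a_\varepsilon)_\varepsilon$, $(b_\varepsilon)_\varepsilon$, $(q_\varepsilon)_\varepsilon$ are non-negative $L^\infty$-moderate, $(u_{0,\varepsilon})_\varepsilon$ is $H^2$-moderate and $(u_{1,\varepsilon})_\varepsilon$ is $L^2$-moderate, there exist $N_1,\dots,N_5\in\mathbb{N}_0$ such that
\[
\Vert a_\varepsilon\Vert_{L^\infty}\lesssim\varepsilon^{-N_1},\qquad \Vert b_\varepsilon\Vert_{L^\infty}\lesssim\varepsilon^{-N_2},\qquad \Vert q_\varepsilon\Vert_{L^\infty}\lesssim\varepsilon^{-N_3},
\]
\[
\Vert u_{0,\varepsilon}\Vert_{L^2}+\Vert u_{0,\varepsilon}''\Vert_{L^2}\lesssim\varepsilon^{-N_4},\qquad \Vert u_{1,\varepsilon}\Vert_{L^2}\lesssim\varepsilon^{-N_5}.
\]

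Next I would fix $\varepsilon\in(0,1]$. Since $q_\varepsilon=q\ast\psi_\varepsilon$ is smooth and real-valued and $(0,1)$ is bounded, $q_\varepsilon\in L^\infty(0,1)$ is a real potential; likewise $a_\varepsilon,b_\varepsilon\in L^\infty(0,1)$ are non-negative, $u_{0,\varepsilon}\in L^2(0,1)$ with $u_{0,\varepsilon}''\in L^2(0,1)$, and $u_{1,\varepsilon}\in L^2(0,1)$. Hence Corollary~\ref{cor2} applies to the regularised problem \eqref{vw1-case2} and produces, for each $\varepsilon$, a unique solution $u_\varepsilon\in C([0,T];W_{\mathcal L}^1(0,1))\cap C^1([0,T];L^2(0,1))$ obeying
\[
\sup_{t\in[0,T]}\Vert u_\varepsilon(t,\cdot)\Vert_1\lesssim(1+\Vert q_\varepsilon\Vert_{L^\infty})(1+\Vert a_\varepsilon\Vert_{L^\infty})(1+\Vert b_\varepsilon\Vert_{L^\infty})\big[\Vert u_{0,\varepsilon}\Vert_{L^2}+\Vert u_{0,\varepsilon}''\Vert_{L^2}+\Vert u_{1,\varepsilon}\Vert_{L^2}\big].
\]
Inserting the moderateness bounds above into this inequality yields
\[
\sup_{t\in[0,T]}\Vert u_\varepsilon(t,\cdot)\Vert_1\lesssim\varepsilon^{-(N_1+N_2+N_3)-\max\{N_4,N_5\}},
\]
so $(u_\varepsilon)_\varepsilon$ is $1$-moderate. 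Since $(u_\varepsilon)_\varepsilon$ also solves the $\varepsilon$-dependent problems \eqref{vw1-case2} with the prescribed regularising nets, it is a very weak solution of \eqref{eq_s=0} in the sense of Definition~\ref{Defn VWS case-2}.

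The analytic heart of the argument has already been carried out in Corollary~\ref{cor2}, so the only genuine point requiring attention is the bookkeeping that the mollified data fit the hypotheses of that corollary for each fixed $\varepsilon$ — in particular that $q_\varepsilon$ is a bona fide real $L^\infty$ potential, so that the classical well-posedness underlying Corollary~\ref{cor2} is available, and that the $H^2$-moderate net $(u_{0,\varepsilon})_\varepsilon$ supplies exactly the combination $\Vert u_{0,\varepsilon}\Vert_{L^2}+\Vert u_{0,\varepsilon}''\Vert_{L^2}$ that appears on the right-hand side of the estimate. Once this is checked, the product structure of the bound makes the uniform $1$-moderateness of $(u_\varepsilon)_\varepsilon$, and hence the existence of a very weak solution, immediate.
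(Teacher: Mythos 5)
Your proposal is correct and follows essentially the same route as the paper: invoke Corollary \ref{cor2} for each fixed $\varepsilon$ (this is exactly why the $s=1$ case tolerates singular $q$), insert the moderateness bounds for $(a_\varepsilon)_\varepsilon$, $(b_\varepsilon)_\varepsilon$, $(q_\varepsilon)_\varepsilon$, $(u_{0,\varepsilon})_\varepsilon$, $(u_{1,\varepsilon})_\varepsilon$, and conclude uniform $1$-moderateness of $(u_\varepsilon)_\varepsilon$. Your exponent $\varepsilon^{-(N_1+N_2+N_3)-\max\{N_4,N_5\}}$ is in fact the more careful bookkeeping of the product bound (the paper states $\varepsilon^{-\max\{N_1,N_2,N_3\}-\max\{N_4,N_5\}}$), and either form suffices for moderateness.
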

\begin{proof}
    Since $a,b,q,u_0,u_1$ are moderate, this means that there exists $N_1, N_2, N_3, N_4, N_5 \in\mathbb N$, such that
    \begin{equation*}
        \Vert a_{\varepsilon}\Vert_{L^{\infty}} \lesssim \varepsilon^{-N_1},\quad \Vert b_{\varepsilon}\Vert_{L^{\infty}} \lesssim \varepsilon^{-N_2},\quad \Vert q_{\varepsilon}\Vert_{L^{\infty}} \lesssim \varepsilon^{-N_3},
    \end{equation*}
    and 
    \begin{equation*}
        \Vert u_{0,\varepsilon}\Vert_{L^2} + \Vert u''_{0,\varepsilon}\Vert_{L^2} \lesssim \varepsilon^{-N_4},\quad \Vert u_{1,\varepsilon}\Vert_{L^2} \lesssim \varepsilon^{-N_5}.
    \end{equation*}
    From \eqref{Energy estimates cor}, we have
    \begin{eqnarray*}
        \Vert u_{\varepsilon}(t,\cdot)\Vert_{1}&=&\Vert u_{\varepsilon}(t,\cdot)\Vert_{L^2}+\Vert \mathcal{L}^{\frac{1}{2}}u_{\varepsilon}(t,\cdot)\Vert_{L^2}+ \Vert u_{t,\varepsilon}(t,\cdot)\Vert_{L^2} \\ 
        &\lesssim&(1 + \Vert q_{\varepsilon}\Vert_{L^{\infty}})(1 + \Vert a_{\varepsilon}\Vert_{L^{\infty}})(1 + \Vert b_{\varepsilon}\Vert_{L^{\infty}})\big[\Vert u_{0,\varepsilon}\Vert_{L^2} + \Vert u''_{0,\varepsilon}\Vert_{L^2} + \Vert u_{1,\varepsilon}\Vert_{L^2} \big]\\
        &\lesssim& (1+\varepsilon^{-N_3})(1+\varepsilon^{-N_1})(1+\varepsilon^{-N_2})\big[\varepsilon^{-N_4}+\varepsilon^{-N_5}\big]\\
        &\lesssim&\varepsilon^{-\max\{N_1,N_2,N_3\}-\max\{N_4,N_5\}},
    \end{eqnarray*}
for all $t\in[0,T]$. Thus, $(u_{\varepsilon})_{\varepsilon}$ is $C^1$-moderate and the existence of a very weak solution follows.    
\end{proof}

In order to prove the uniqueness of the very weak solution in the case when $s=1$, we need to adapt Definition \ref{D4} to this case. The definition reads,

\begin{defn}[Uniqueness of very weak solutions]\label{D4-1}
We say that the initial/boundary problem \eqref{eq_s=0} has a unique very weak solution, if
for any non-negative $L^\infty$-moderate nets $(a_\varepsilon)_{\varepsilon}$, $(\Tilde{a}_\varepsilon)_{\varepsilon}$, $(b_\varepsilon)_{\varepsilon}$, $(\Tilde{b}_\varepsilon)_{\varepsilon}$, and real $(q_\varepsilon)_{\varepsilon}$, $(\Tilde{q}_\varepsilon)_{\varepsilon}$, such that $(a_\varepsilon-\Tilde{a}_\varepsilon)_{\varepsilon}$, $(b_\varepsilon-\Tilde{b}_\varepsilon)_{\varepsilon}$ and $(q_\varepsilon-\Tilde{q}_\varepsilon)_{\varepsilon}$ are $L^\infty$-negligible; for any $H^2$-moderate regularisations $(u_{0,\varepsilon},\,\Tilde{u}_{0,\varepsilon})_{\varepsilon}$ such that $(u_{0,\varepsilon}-\Tilde{u}_{0,\varepsilon})_{\varepsilon}$ is $H^2$-negligible and for any $L^2$-moderate regularisations $(u_{1,\varepsilon},\,\Tilde{u}_{1,\varepsilon})_{\varepsilon}$, such that $(u_{1,\varepsilon}-\Tilde{u}_{1,\varepsilon})_{\varepsilon}$ is $L^2$-negligible, we have that $(u_\varepsilon-\Tilde{u}_\varepsilon)_{\varepsilon}$ is $L^2$-negligible for all $t\in[0,T]$, where $(u_{\varepsilon})_{\varepsilon}$ and $(\Tilde{u}_{\varepsilon})_{\varepsilon}$ are the families of solutions to the corresponding regularised problems
\begin{equation}\label{un1-1}
\left\{\begin{array}{l}\partial^2_t u_\varepsilon(t,x)+\mathcal{L}_{q_\varepsilon}u_\varepsilon(t,x)+a_{\varepsilon}(x)u_\varepsilon(t,x)+b_\varepsilon(x)\partial_tu_\varepsilon(t,x)=0,\quad (t,x)\in [0,T]\times(0,1),\\
u_\varepsilon(0,x)=u_{0,\varepsilon}(x),\quad \partial_tu_\varepsilon(0,x)=u_{1,\varepsilon}(x),\quad x\in (0,1), \\
u_\varepsilon(t,0)=0=u_\varepsilon(t,1), \quad t\in[0,T],\end{array}\right.
\end{equation}
and
\begin{equation}\label{un2-1}
    \left\{\begin{array}{l}\partial^2_t \Tilde{u}_\varepsilon(t,x)+\mathcal{L}_{\Tilde{q}_\varepsilon} \Tilde{u}_\varepsilon(t,x)+\Tilde{a}_\varepsilon(x)\Tilde{u}_\varepsilon(t,x)+\Tilde{b}_\varepsilon(x) \partial_t\Tilde{u}_\varepsilon(t,x)=0,\quad (t,x)\in [0,T]\times(0,1),\\
 \Tilde{u}_\varepsilon(0,x)=\Tilde{u}_{0,\varepsilon}(x),\quad \partial_t \Tilde{u}_\varepsilon(0,x)=\Tilde{u}_{1,\varepsilon}(x), \quad x\in (0,1), \\
\Tilde{u}_\varepsilon(t,0)=0=\Tilde{u}_\varepsilon(t,1), \quad t\in[0,T],
\end{array}\right.
\end{equation}
respectively.
\end{defn}


\begin{thm}[Uniqueness]\label{Thm2 uniqueness}
    Let $a,b,q,u_0,u_1\in \mathcal{D}'(0,1)$. Under the assumptions of of Theorem \ref{Ext_s=0}, the very weak solution to the initial/boundary problem \eqref{eq_s=0} is unique.
\end{thm}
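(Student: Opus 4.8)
The plan is to follow the scheme of the proof of Theorem~\ref{Thm1 uniqueness}, now with the singular potential incorporated. The only structural changes are that the energy estimate of Theorem~\ref{Thm energy estimates 2} is replaced by its $s=1$ refinement in Corollary~\ref{cor2} (whose constant is explicit in $\Vert q\Vert_{L^\infty}$, which is exactly what makes a singular potential admissible here), and that one extra source term appears, produced by the mismatch of the regularised potentials.

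Let $(u_\varepsilon)_\varepsilon$ and $(\Tilde{u}_\varepsilon)_\varepsilon$ be the nets of solutions to \eqref{un1-1} and \eqref{un2-1} associated with the regularised families $\big(a_\varepsilon,b_\varepsilon,q_\varepsilon,u_{0,\varepsilon},u_{1,\varepsilon}\big)_\varepsilon$ and $\big(\Tilde{a}_\varepsilon,\Tilde{b}_\varepsilon,\Tilde{q}_\varepsilon,\Tilde{u}_{0,\varepsilon},\Tilde{u}_{1,\varepsilon}\big)_\varepsilon$, respectively, and assume that $(a_\varepsilon-\Tilde{a}_\varepsilon)_\varepsilon$, $(b_\varepsilon-\Tilde{b}_\varepsilon)_\varepsilon$, $(q_\varepsilon-\Tilde{q}_\varepsilon)_\varepsilon$ are $L^\infty$-negligible, $(u_{0,\varepsilon}-\Tilde{u}_{0,\varepsilon})_\varepsilon$ is $H^2$-negligible, and $(u_{1,\varepsilon}-\Tilde{u}_{1,\varepsilon})_\varepsilon$ is $L^2$-negligible. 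First I would set $U_\varepsilon:=u_\varepsilon-\Tilde{u}_\varepsilon$ and compute the problem it satisfies. Using $\mathcal{L}_{q_\varepsilon}u_\varepsilon-\mathcal{L}_{\Tilde{q}_\varepsilon}\Tilde{u}_\varepsilon=\mathcal{L}_{q_\varepsilon}U_\varepsilon+(q_\varepsilon-\Tilde{q}_\varepsilon)\Tilde{u}_\varepsilon$ and rearranging the zeroth and first order terms as in \eqref{Equation1 U}, one finds that $U_\varepsilon$ solves
\begin{equation*}
\partial_t^2 U_\varepsilon+\mathcal{L}_{q_\varepsilon}U_\varepsilon+a_\varepsilon U_\varepsilon+b_\varepsilon\partial_t U_\varepsilon=f_\varepsilon,
\end{equation*}
on $[0,T]\times(0,1)$, with $U_\varepsilon(0,\cdot)=u_{0,\varepsilon}-\Tilde{u}_{0,\varepsilon}$, $\partial_t U_\varepsilon(0,\cdot)=u_{1,\varepsilon}-\Tilde{u}_{1,\varepsilon}$ and $U_\varepsilon(t,0)=U_\varepsilon(t,1)=0$, where
\begin{equation*}
f_\varepsilon:=(\Tilde{q}_\varepsilon-q_\varepsilon)\Tilde{u}_\varepsilon+(\Tilde{a}_\varepsilon-a_\varepsilon)\Tilde{u}_\varepsilon+(\Tilde{b}_\varepsilon-b_\varepsilon)\partial_t\Tilde{u}_\varepsilon.
\end{equation*}

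Next I would apply Duhamel's principle (Proposition~\ref{Prop Duhamel}) to write $U_\varepsilon(t,x)=W_\varepsilon(t,x)+\int_0^t V_\varepsilon(t,x;\tau)\,\d\tau$, where $W_\varepsilon$ solves the homogeneous equation $\partial_t^2 W_\varepsilon+\mathcal{L}_{q_\varepsilon}W_\varepsilon+a_\varepsilon W_\varepsilon+b_\varepsilon\partial_t W_\varepsilon=0$ with the above initial data, and $V_\varepsilon(\cdot,\cdot;\tau)$ solves the same homogeneous equation with zero displacement and velocity $f_\varepsilon(\tau,\cdot)$ prescribed at time $\tau$. Taking $L^2$-norms, using Minkowski's integral inequality and then the estimate \eqref{Energy estimates cor} of Corollary~\ref{cor2} for $W_\varepsilon$ and for each $V_\varepsilon(\cdot,\cdot;\tau)$ — which is legitimate since $H^2$-moderateness of $(u_{0,\varepsilon})_\varepsilon$ and $(\Tilde{u}_{0,\varepsilon})_\varepsilon$ gives $(u_{0,\varepsilon}-\Tilde{u}_{0,\varepsilon})''\in L^2(0,1)$, and $f_\varepsilon(\tau,\cdot)\in L^2(0,1)$ because $\Tilde{u}_\varepsilon\in C([0,T];W^1_\mathcal{L}(0,1))\cap C^1([0,T];L^2(0,1))$ and the coefficient differences lie in $L^\infty(0,1)$ — I would obtain
\begin{align*}
\Vert U_\varepsilon(t,\cdot)\Vert_{L^2}\lesssim C_\varepsilon\Big[&\Vert u_{0,\varepsilon}-\Tilde{u}_{0,\varepsilon}\Vert_{L^2}+\Vert(u_{0,\varepsilon}-\Tilde{u}_{0,\varepsilon})''\Vert_{L^2}\\
&+\Vert u_{1,\varepsilon}-\Tilde{u}_{1,\varepsilon}\Vert_{L^2}+\int_0^T\Vert f_\varepsilon(\tau,\cdot)\Vert_{L^2}\,\d\tau\Big],
\end{align*}
uniformly in $t\in[0,T]$, with $C_\varepsilon:=(1+\Vert q_\varepsilon\Vert_{L^\infty})(1+\Vert a_\varepsilon\Vert_{L^\infty})(1+\Vert b_\varepsilon\Vert_{L^\infty})$. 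Then I would bound
\begin{equation*}
\Vert f_\varepsilon(\tau,\cdot)\Vert_{L^2}\le\big(\Vert\Tilde{q}_\varepsilon-q_\varepsilon\Vert_{L^\infty}+\Vert\Tilde{a}_\varepsilon-a_\varepsilon\Vert_{L^\infty}\big)\Vert\Tilde{u}_\varepsilon(\tau,\cdot)\Vert_{L^2}+\Vert\Tilde{b}_\varepsilon-b_\varepsilon\Vert_{L^\infty}\Vert\partial_t\Tilde{u}_\varepsilon(\tau,\cdot)\Vert_{L^2},
\end{equation*}
and use that $(\Tilde{u}_\varepsilon)_\varepsilon$ is uniformly $1$-moderate to get $\Vert\Tilde{u}_\varepsilon(\tau,\cdot)\Vert_{L^2},\Vert\partial_t\Tilde{u}_\varepsilon(\tau,\cdot)\Vert_{L^2}\le\Vert\Tilde{u}_\varepsilon(\tau,\cdot)\Vert_1\lesssim\varepsilon^{-N}$ uniformly in $\tau\in[0,T]$.

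Finally, I would note that $C_\varepsilon$ is moderate, being a product of $L^\infty$-moderate nets, while every term in the bracket above is negligible: the first three by the assumed $H^2$- and $L^2$-negligibility of the data differences, and $\int_0^T\Vert f_\varepsilon(\tau,\cdot)\Vert_{L^2}\,\d\tau$ because it is dominated by the product of an $L^\infty$-negligible net with a uniformly $1$-moderate one. Since the product of a moderate net and a negligible net is negligible, this yields $\Vert U_\varepsilon(t,\cdot)\Vert_{L^2}\lesssim\varepsilon^k$ for every $k>0$, uniformly in $t\in[0,T]$, i.e.\ $(u_\varepsilon-\Tilde{u}_\varepsilon)_\varepsilon$ is $L^2$-negligible, which is precisely the conclusion required by Definition~\ref{D4-1}. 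The only genuinely new point relative to Theorem~\ref{Thm1 uniqueness}, and hence the step I would check most carefully, is the treatment of the potential mismatch: one must verify that $\mathcal{L}_{q_\varepsilon}u_\varepsilon-\mathcal{L}_{\Tilde{q}_\varepsilon}\Tilde{u}_\varepsilon$ contributes to the source exactly the zeroth-order remainder $(q_\varepsilon-\Tilde{q}_\varepsilon)\Tilde{u}_\varepsilon$, and that this remainder is controlled in $L^2(0,1)$ using only $\Vert\Tilde{u}_\varepsilon(\tau,\cdot)\Vert_{L^2}$ — which is all that uniform $1$-moderateness supplies, and all that is needed. The remaining steps reproduce the argument of Theorem~\ref{Thm1 uniqueness} verbatim.
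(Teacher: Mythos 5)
Your proposal is correct and follows essentially the same route as the paper: form $U_\varepsilon=u_\varepsilon-\Tilde{u}_\varepsilon$, identify the source term $(\Tilde{a}_\varepsilon-a_\varepsilon+\Tilde{q}_\varepsilon-q_\varepsilon)\Tilde{u}_\varepsilon+(\Tilde{b}_\varepsilon-b_\varepsilon)\partial_t\Tilde{u}_\varepsilon$, apply Duhamel's principle, and invoke the $s=1$ energy estimate of Corollary \ref{cor2} together with moderateness of the coefficients, $1$-moderateness of $\Tilde{u}_\varepsilon$, and negligibility of the differences to conclude $\Vert U_\varepsilon(t,\cdot)\Vert_{L^2}\lesssim\varepsilon^k$ for all $k>0$. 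Your explicit handling of the potential mismatch and the constant $(1+\Vert q_\varepsilon\Vert_{L^\infty})(1+\Vert a_\varepsilon\Vert_{L^\infty})(1+\Vert b_\varepsilon\Vert_{L^\infty})$ matches the paper's argument (and in fact corrects a small typo in the paper's displayed constant).
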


\begin{proof}
    Let $(u_{\varepsilon})_{\varepsilon}$ and $(\Tilde{u}_{\varepsilon})_{\varepsilon}$ be the nets of solutions to \eqref{un1-1} and \eqref{un2-1} corresponding to the families of regularised coefficients and initial data $\big( a_\varepsilon,b_\varepsilon,q_\varepsilon,u_{0,\varepsilon},u_{1,\varepsilon} \big)_\varepsilon$ and $\big( \Tilde{a}_\varepsilon,\Tilde{b}_\varepsilon,\Tilde{q}_\varepsilon,\Tilde{u}_{0,\varepsilon},\Tilde{u}_{1,\varepsilon} \big)_\varepsilon$ respectively. Assume that the nets $(a_\varepsilon-\Tilde{a}_\varepsilon)_{\varepsilon}$, $(b_\varepsilon-\Tilde{b}_\varepsilon)_{\varepsilon}$ and $(q_\varepsilon-\Tilde{q}_\varepsilon)_{\varepsilon}$ are $L^\infty$-negligible; $(u_{0,\varepsilon}-\Tilde{u}_{0,\varepsilon})_{\varepsilon}$ is $H^2$-negligible and $(u_{1,\varepsilon}-\Tilde{u}_{1,\varepsilon})_{\varepsilon}$ is $L^2$-negligible. Then, $(U_{\varepsilon}(t,x))_{\varepsilon}:= (u_{\varepsilon}(t,x)-\Tilde{u}_{\varepsilon}(t,x))_\varepsilon$ is solution to
    \begin{equation}\label{Equation1 U-1}
        \bigg\{
        \begin{array}{l}
        \partial_{t}^2U_{\varepsilon}(t,x) + \mathcal{L}_{q_\varepsilon}U_{\varepsilon}(t,x) + a_{\varepsilon}(x)U_{\varepsilon}(t,x) + b_{\varepsilon}(x)\partial_{t}U_{\varepsilon}(t,x)=f_{\varepsilon}(t,x),\\
        U_{\varepsilon}(0,x)=(u_{0,\varepsilon}-\Tilde{u}_{0,\varepsilon})(x),\quad \partial_{t}U_{\varepsilon}(0,x)=(u_{1,\varepsilon}-\Tilde{u}_{1,\varepsilon})(x),\\
        U_{\varepsilon}(t,0)=U_{\varepsilon}(t,1)=0, 
        \end{array}
    \end{equation}
    for $(t,x)\in [0,T]\times(0,1))$, where,
    \begin{equation*}
        f_{\varepsilon}(t,x):=\Big[\big(\Tilde{a}_{\varepsilon}(x)-a_{\varepsilon}(x)\big) + \big(\Tilde{q}_{\varepsilon}(x)-q_{\varepsilon}(x)\big)\Big]\Tilde{u}_{\varepsilon}(t,x) + \big(\Tilde{b}_{\varepsilon}(x)-b_{\varepsilon}(x)\big)\partial_{t}\Tilde{u}_{\varepsilon}(t,x).
    \end{equation*}
    Thanks to Duhamel's principle, $U_{\varepsilon}(t,x)$ can be represented as
    \begin{equation}\label{Duhamel1 solution-1}
        U_{\varepsilon}(t,x) = W_{\varepsilon}(t,x) + \int_{0}^{t}V_{\varepsilon}(t,x;\tau)\d \tau,
    \end{equation}
    where $W_{\varepsilon}(t,x)$ is the solution to the problem
    \begin{equation}\label{Equation1 W-1}
        \bigg\{
        \begin{array}{l}
        \partial_{t}^2W_{\varepsilon}(t,x) + \mathcal{L}_{q_\varepsilon}W_{\varepsilon}(t,x) + a_{\varepsilon}(x)W_{\varepsilon}(t,x) + b_{\varepsilon}(x)\partial_{t}W_{\varepsilon}(t,x)=0,\\
        W_{\varepsilon}(0,x)=(u_{0,\varepsilon}-\Tilde{u}_{0,\varepsilon})(x),\quad \partial_{t}U_{\varepsilon}(0,x)=(u_{1,\varepsilon}-\Tilde{u}_{1,\varepsilon})(x),\\
        W_{\varepsilon}(t,0)=W_{\varepsilon}(t,1)=0, 
        \end{array}
    \end{equation}
    for $(t,x)\in [0,T]\times(0,1)$, and $V_{\varepsilon}(t,x;\tau)$ solves
    \begin{equation}\label{Equation1 V-1}
        \bigg\{
        \begin{array}{l}
        \partial_{t}^2V_{\varepsilon}(t,x;\tau) + \mathcal{L}_{q_\varepsilon}V_{\varepsilon}(t,x;\tau) + a_{\varepsilon}(x)V_{\varepsilon}(t,x;\tau) + b_{\varepsilon}(x)\partial_{t}V_{\varepsilon}(t,x;\tau)=0,\\
        V_{\varepsilon}(\tau,x;\tau)=0,\quad \partial_{t}V_{\varepsilon}(\tau,x;\tau)=f_{\varepsilon}(\tau,x),\\
        V_{\varepsilon}(t,0;\tau)=V_{\varepsilon}(t,1;\tau)=0, 
        \end{array}
    \end{equation}
    for $(t,x)\in [\tau,T]\times(0,1)$ and $\tau\in [0,T]$. Using the estimate \eqref{Energy estimates cor} and reasoning similarly as in the proof of Theorem \ref{Thm1 uniqueness}, we arrive at
    \begin{align}\label{Estimate U-1}
        \Vert U_{\varepsilon}(t,\cdot)\Vert_{L^2} \lesssim  \Big(1+\Vert a_{\varepsilon}\Vert_{L^{\infty}}\Big)&\Big(1 + \Vert b_{\varepsilon}\Vert_{L^{\infty}}\Big)\Big(1 + \Vert b_{\varepsilon}\Vert_{L^{\infty}}\Big)\bigg[\Vert u_{0,\varepsilon}-\Tilde{u}_{0,\varepsilon}\Vert_{L^2}\\
        & + \Vert u''_{0,\varepsilon}-\Tilde{u}''_{0,\varepsilon}\Vert_{L^2} + \Vert u_{1,\varepsilon}-\Tilde{u}_{1,\varepsilon}\Vert_{L^2} + \int_0^T \Vert f_{\varepsilon}(\tau,\cdot)\Vert_{L^2}\d \tau \bigg],
    \end{align}
    and we easily show that $\Vert f_{\varepsilon}(\tau,\cdot)\Vert_{L^2}$ can be estimated by
    \begin{align}\label{Estimate f_epsilon-1}
        \Vert f_{\varepsilon}(\tau,\cdot)\Vert_{L^2} \leq \Big(\Vert \Tilde{a}_{\varepsilon}-a_{\varepsilon}\Vert_{L^{\infty}} + \Vert \Tilde{q}_{\varepsilon}-q_{\varepsilon}\Vert_{L^{\infty}}\Big)\Vert\Tilde{u}_{\varepsilon}(\tau,\cdot)\Vert_{L^2 } + \Vert\Tilde{b}_{\varepsilon}-b_{\varepsilon}\Vert_{L^{\infty}}\Vert\partial_{t}\Tilde{u}_{\varepsilon}(\tau,\cdot)\Vert_{L^2}.
    \end{align}
    Combining \eqref{Estimate U-1} and \eqref{Estimate f_epsilon-1} and using the moderateness and negligibility assumptions, one can easily see that
    \begin{equation*}
        \Vert U_{\varepsilon}(t,\cdot)\Vert_{L^2} \lesssim \varepsilon^{k},
    \end{equation*}
    for all $k>0$ and $t\in [0,T]$, showing the uniqueness of the very weak solution.    
\end{proof}

\section{Consistency with classical theory}\label{Consistency}
We conclude this article with the important question of proving that the classical solutions to the initial/boundary problem \eqref{Equation}, as given in Theorem \ref{Thm energy estimates 2} and Corollary \ref{cor2}, can be recaptured by the very weak solutions as $\varepsilon\rightarrow 0$. We prove the following theorems in both cases: the case when $s\geq 0$ and the case when $s=1$.

\begin{thm}[Consistency, case: $s\geq 0$]\label{Thm Consistency 1}
    Let $T>0$ and $s\geq 0$. Assume $a,b\in L^{\infty}(0,1)$ to be non-negative and $q\in L^{\infty}(0,1)$ is real and let $u_0 \in W_{\mathcal{L}}^s(0,1)$ and $u_1 \in L^2(0,1)$, in such way that a classical solution to \eqref{Equation} exists. Then, for any regularising families $(a_{\varepsilon})_{\varepsilon}$, $(b_{\varepsilon})_{\varepsilon}$ for the equation coefficients, satisfying
    \begin{equation}\label{approx.condition1}
        \lVert a_{\varepsilon} - a\rVert_{L^{\infty}} \rightarrow 0\quad\text{and}\quad \lVert b_{\varepsilon} - b\rVert_{L^{\infty}} \rightarrow 0, \text{~as~} \varepsilon \rightarrow 0,
    \end{equation} 
    and any regularising families $(u_{0,\varepsilon})_{\varepsilon}$ and $(u_{1,\varepsilon})_{\varepsilon}$ for the initial data, satisfying
    \begin{equation}\label{approx.condition2}
        \Vert u_{0,\varepsilon}-u_{0}\Vert_{W_{\mathcal{L}}^{s}} \rightarrow 0 \quad \text{and}\quad \Vert u_{1,\varepsilon}-u_{1}\Vert_{L^2} \rightarrow 0\text{~as~} \varepsilon \rightarrow 0,
    \end{equation}
    the net $(u_{\varepsilon})_{\varepsilon}$ converges to the classical solution of the initial/boundary problem (\ref{Equation}) in $L^{2}$ as $\varepsilon \rightarrow 0$.
\end{thm}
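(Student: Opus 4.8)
The plan is to imitate the scheme used in the uniqueness proofs, but comparing the classical solution $u$ directly with the net $(u_\varepsilon)_\varepsilon$ rather than comparing two nets. First I would set $W_\varepsilon:=u-u_\varepsilon$, where $u$ is the classical solution of \eqref{Equation} and $u_\varepsilon$ solves the regularised problem \eqref{vw1}. Subtracting the two equations and rewriting the lower-order terms as $au-a_\varepsilon u_\varepsilon=(a-a_\varepsilon)u+a_\varepsilon W_\varepsilon$ and $bu_t-b_\varepsilon\partial_t u_\varepsilon=(b-b_\varepsilon)u_t+b_\varepsilon\partial_t W_\varepsilon$, one sees that $W_\varepsilon$ solves
\begin{equation*}
\partial_t^2 W_\varepsilon(t,x)+\mathcal{L}^s W_\varepsilon(t,x)+a_\varepsilon(x)W_\varepsilon(t,x)+b_\varepsilon(x)\partial_t W_\varepsilon(t,x)=f_\varepsilon(t,x),
\end{equation*}
with $f_\varepsilon:=-(a-a_\varepsilon)u-(b-b_\varepsilon)u_t$, subject to the initial data $W_\varepsilon(0,\cdot)=u_0-u_{0,\varepsilon}$, $\partial_t W_\varepsilon(0,\cdot)=u_1-u_{1,\varepsilon}$, and homogeneous Dirichlet boundary conditions (which hold since both $u$ and $u_\varepsilon$ satisfy them, and the difference belongs to $C([0,T];W_{\mathcal L}^s(0,1))\cap C^1([0,T];L^2(0,1))$).

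Next I would apply Duhamel's principle (Proposition \ref{Prop Duhamel}) to write $W_\varepsilon=\omega_\varepsilon+\int_0^t v_\varepsilon(t,\cdot;\tau)\,\d\tau$, where $\omega_\varepsilon$ solves the homogeneous problem with data $(u_0-u_{0,\varepsilon},\,u_1-u_{1,\varepsilon})$ and $v_\varepsilon(\cdot,\cdot;\tau)$ solves the auxiliary problem with data $(0,\,f_\varepsilon(\tau,\cdot))$ at time $\tau$. Taking the $L^2$-norm, using Minkowski's integral inequality and then the energy estimate \eqref{Energy estimates} of Theorem \ref{Thm energy estimates 2} applied to each of $\omega_\varepsilon$ and $v_\varepsilon$, and observing that the estimate for $v_\varepsilon$ only involves $\|f_\varepsilon(\tau,\cdot)\|_{W_{\mathcal L}^{-s}}\lesssim\|f_\varepsilon(\tau,\cdot)\|_{L^2}$ by Proposition \ref{Prop w^-s} (note that $f_\varepsilon(\tau,\cdot)\in L^2(0,1)$ as a product of an $L^\infty$ and an $L^2$ function, and that it satisfies the Dirichlet conditions through its eigenfunction expansion, exactly as in the proof of Theorem \ref{Thm energy estimates 2}), one arrives at
\begin{equation*}
\|W_\varepsilon(t,\cdot)\|_{L^2}\lesssim\big(1+\|a_\varepsilon\|_{L^\infty}+\|b_\varepsilon\|_{L^\infty}\big)\Big[\|u_0-u_{0,\varepsilon}\|_{W_{\mathcal L}^s}+\|u_1-u_{1,\varepsilon}\|_{L^2}+\int_0^T\|f_\varepsilon(\tau,\cdot)\|_{L^2}\,\d\tau\Big].
\end{equation*}

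Finally I would bound $\|f_\varepsilon(\tau,\cdot)\|_{L^2}\leq\|a-a_\varepsilon\|_{L^\infty}\|u(\tau,\cdot)\|_{L^2}+\|b-b_\varepsilon\|_{L^\infty}\|u_t(\tau,\cdot)\|_{L^2}$, and since $u$ is the classical solution in $C([0,T];W_{\mathcal L}^s(0,1))\cap C^1([0,T];L^2(0,1))$, both $\sup_{\tau\in[0,T]}\|u(\tau,\cdot)\|_{L^2}$ and $\sup_{\tau\in[0,T]}\|u_t(\tau,\cdot)\|_{L^2}$ are finite, so by \eqref{approx.condition1} the integral term is $\lesssim\|a-a_\varepsilon\|_{L^\infty}+\|b-b_\varepsilon\|_{L^\infty}\to0$. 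Also by \eqref{approx.condition1} the prefactor $1+\|a_\varepsilon\|_{L^\infty}+\|b_\varepsilon\|_{L^\infty}$ converges to $1+\|a\|_{L^\infty}+\|b\|_{L^\infty}$ and is in particular uniformly bounded in $\varepsilon$. Combining this with \eqref{approx.condition2} yields $\sup_{t\in[0,T]}\|u(t,\cdot)-u_\varepsilon(t,\cdot)\|_{L^2}\to0$ as $\varepsilon\to0$, which is the claim. There is no serious obstacle here --- the argument is essentially the uniqueness proof of Theorem \ref{Thm1 uniqueness} with $\widetilde u_\varepsilon$ replaced by the classical solution and negligibility replaced by convergence to $0$; the only points requiring care are the uniform boundedness in $\varepsilon$ of the constants (guaranteed by \eqref{approx.condition1}) and the fact that $f_\varepsilon$ lies in $L^2$ with a uniformly controlled norm, so that the energy estimate genuinely applies.
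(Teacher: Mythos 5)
Your proposal is correct and follows essentially the same route as the paper: the paper also forms the difference $U_\varepsilon=u_\varepsilon-u$, identifies the same source term $f_\varepsilon=-(a_\varepsilon-a)u-(b_\varepsilon-b)\partial_t u$, applies Duhamel's principle and the energy estimate \eqref{Energy estimates}, and concludes from \eqref{approx.condition1}--\eqref{approx.condition2} together with the boundedness of $\Vert a_\varepsilon\Vert_{L^\infty}$, $\Vert b_\varepsilon\Vert_{L^\infty}$, $\Vert u(\tau,\cdot)\Vert_{L^2}$ and $\Vert \partial_t u(\tau,\cdot)\Vert_{L^2}$. Your additional remarks on why $f_\varepsilon(\tau,\cdot)\in L^2$ and on the applicability of Proposition \ref{Prop w^-s} only make explicit points the paper leaves implicit by referring back to the uniqueness proof.
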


\begin{proof}
    Let $u$ be the classical solution to \eqref{Equation} and $(u_\varepsilon)_\varepsilon$ its very weak solution. Then, for $\varepsilon\in (0,1]$, $U_{\varepsilon}(t,x):= u_{\varepsilon}(t,x)-u(t,x)$ is solution to
    \begin{equation}\label{cons1}
        \left\{\begin{array}{l}\partial^2_t U_\varepsilon(t,x)+\mathcal{L}^sU_\varepsilon(t,x)+a_{\varepsilon}(x)U_\varepsilon(t,x)+b_\varepsilon(x)\partial_tU_\varepsilon(t,x)=f_{\varepsilon}(t,x),\\
        U_{\varepsilon}(0,x)=(u_{0,\varepsilon}-u_0)(x),\quad \partial_{t}U_{\varepsilon}(0,x)=(u_{1,\varepsilon}-u_1)(x),\\
        U_\varepsilon(t,0)=U_\varepsilon(t,1)=0,
        \end{array}\right.
    \end{equation}
    where $(t,x)\in [0,T]\times(0,1)$ and
    \begin{equation*}
        f_{\varepsilon}(t,x):=-\big(a_{\varepsilon}(x)-a(x)\big)u(t,x) - \big(b_{\varepsilon}(x)-b(x)\big)\partial_{t}u(t,x).
    \end{equation*}
    By arguing as we did in Theorem \ref{Thm1 uniqueness}, we obtain
    \begin{align}\label{Estimate U 3}
        \Vert U_{\varepsilon}(t,\cdot)\Vert_{L^2} \lesssim & \Big(1+\Vert a_{\varepsilon}\Vert_{L^{\infty}}+\Vert b_{\varepsilon}\Vert_{L^{\infty}}\Big)\bigg[\Vert u_{0,\varepsilon}-u_{0}\Vert_{W_{\mathcal{L}}^{s}} + \Vert u_{1,\varepsilon}-u_{1}\Vert_{L^2}\\
        & \nonumber+ \Vert a_{\varepsilon}-a\Vert_{L^{\infty}}\int_0^T \Vert u(\tau,\cdot)\Vert_{L^2 }\d \tau + \Vert b_{\varepsilon}-b\Vert_{L^{\infty}}\int_0^T \Vert\partial_t u(\tau,\cdot)\Vert_{L^2 }\d \tau\bigg],
    \end{align}
    uniformly in $t\in [0,T]$. Since
    \begin{equation*}
        \lVert a_{\varepsilon} - a\rVert_{L^{\infty}} \rightarrow 0,~ \lVert b_{\varepsilon} - b\rVert_{L^{\infty}} \rightarrow 0, ~ \Vert u_{0,\varepsilon}-u_{0}\Vert_{W_{\mathcal{L}}^{s}} \rightarrow 0~\text{and}~ \Vert u_{1,\varepsilon}-u_{1}\Vert_{L^2} \rightarrow 0
    \end{equation*}
    as $\varepsilon \rightarrow 0$ by assumption and the terms $\Vert a_{\varepsilon}\Vert_{L^{\infty}}$, $\Vert b_{\varepsilon}\Vert_{L^{\infty}}$, $\Vert u(\tau,\cdot)\Vert_{L^2 }$ and $\Vert\partial_t u(\tau,\cdot)\Vert_{L^2 }$ are bounded, it follows that
    \begin{equation*}
        \Vert U_{\varepsilon}(t,\cdot)\Vert_{L^2} \rightarrow 0,\quad \text{as } \varepsilon\rightarrow 0,
    \end{equation*}
    uniformly in $t\in [0,T]$. This completes the proof of Theorem \ref{Thm Consistency 1}.    
\end{proof}
In the case when $s=1$, the consistency theorem reads as following.

\begin{thm}[Consistency, case: $s=1$]\label{Thm Consistency 2}
    Let $T>0$. Assume $a,b\in L^{\infty}(0,1)$ to be non-negative and that $q\in L^{\infty}(0,1)$ is real. Let $u_0 \in L^2(0,1)$ such that $u_0'' \in L^2(0,1)$ and $u_1 \in L^2(0,1)$, in such way that a classical solution to \eqref{Equation} exists. Then, for any regularising families $(a_{\varepsilon})_{\varepsilon}$, $(b_{\varepsilon})_{\varepsilon}$ and $(q_{\varepsilon})_{\varepsilon}$ for the equation coefficients, satisfying
    \begin{equation}\label{approx.condition3}
        \lVert a_{\varepsilon} - a\rVert_{L^{\infty}} \rightarrow 0,\quad \lVert b_{\varepsilon} - b\rVert_{L^{\infty}} \rightarrow 0 \quad\text{and~} \lVert q_{\varepsilon} - q\rVert_{L^{\infty}} \rightarrow 0, \text{~as~} \varepsilon \rightarrow 0,
    \end{equation} 
    and any regularising families $(u_{0,\varepsilon})_{\varepsilon}$ and $(u_{1,\varepsilon})_{\varepsilon}$ for the initial data, the net $(u_{\varepsilon})_{\varepsilon}$ converges to the classical solution of the initial/boundary problem (\ref{Equation}) in $L^{2}$ as $\varepsilon \rightarrow 0$.
\end{thm}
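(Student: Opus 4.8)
\emph{Proof proposal.}
The plan is to follow the scheme of the proof of Theorem~\ref{Thm Consistency 1}, but with Corollary~\ref{cor2} in place of Theorem~\ref{Thm energy estimates 2}, so that the regularised potential $q_\varepsilon$ is treated on the same footing as $a_\varepsilon$ and $b_\varepsilon$. Let $u$ denote the classical solution of \eqref{Equation} for $s=1$ (equivalently, the unique solution in $C([0,T];W^1_{\mathcal{L}}(0,1))\cap C^1([0,T];L^2(0,1))$ furnished by Corollary~\ref{cor2}), and let $(u_\varepsilon)_\varepsilon$ be the net solving the regularised problems \eqref{vw1-case2}. Set $U_\varepsilon:=u_\varepsilon-u$. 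Subtracting the two equations and inserting $\pm\mathcal{L}_{q_\varepsilon}u$, $\pm a_\varepsilon u$ and $\pm b_\varepsilon\partial_t u$, one finds that $U_\varepsilon$ solves $\partial_t^2 U_\varepsilon+\mathcal{L}_{q_\varepsilon}U_\varepsilon+a_\varepsilon U_\varepsilon+b_\varepsilon\partial_t U_\varepsilon=f_\varepsilon$ on $[0,T]\times(0,1)$ with $U_\varepsilon(0,\cdot)=u_{0,\varepsilon}-u_0$, $\partial_t U_\varepsilon(0,\cdot)=u_{1,\varepsilon}-u_1$, homogeneous boundary conditions, and source term
\begin{equation*}
f_\varepsilon(t,x):=-\big[(a_\varepsilon(x)-a(x))+(q_\varepsilon(x)-q(x))\big]u(t,x)-\big(b_\varepsilon(x)-b(x)\big)\partial_t u(t,x).
\end{equation*}

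Next I would apply Duhamel's principle (Proposition~\ref{Prop Duhamel}) with spatial operator $\mathcal{L}_{q_\varepsilon}+a_\varepsilon$ and damping coefficient $b_\varepsilon$, writing $U_\varepsilon=W_\varepsilon+\int_0^t V_\varepsilon(\cdot;\tau)\,\d\tau$, where $W_\varepsilon$ solves the homogeneous problem with data $(u_{0,\varepsilon}-u_0,\,u_{1,\varepsilon}-u_1)$ and $V_\varepsilon(\cdot;\tau)$ is the homogeneous evolution started at time $\tau$ from zero displacement and velocity $f_\varepsilon(\tau,\cdot)$. Taking $L^2$ norms, using Minkowski's integral inequality, and estimating $W_\varepsilon$ and $V_\varepsilon$ through the bounds of Corollary~\ref{cor2} exactly as in the proof of Theorem~\ref{Thm2 uniqueness} (note that $V_\varepsilon(\tau,\cdot;\tau)=0$, so only $\Vert f_\varepsilon(\tau,\cdot)\Vert_{L^2}$ enters), one obtains, uniformly in $t\in[0,T]$,
\begin{equation*}
\Vert U_\varepsilon(t,\cdot)\Vert_{L^2}\lesssim K_\varepsilon\Big[\Vert u_{0,\varepsilon}-u_0\Vert_{L^2}+\Vert u_{0,\varepsilon}''-u_0''\Vert_{L^2}+\Vert u_{1,\varepsilon}-u_1\Vert_{L^2}+\int_0^T\Vert f_\varepsilon(\tau,\cdot)\Vert_{L^2}\,\d\tau\Big],
\end{equation*}
with $K_\varepsilon:=(1+\Vert a_\varepsilon\Vert_{L^\infty})(1+\Vert b_\varepsilon\Vert_{L^\infty})(1+\Vert q_\varepsilon\Vert_{L^\infty})$, while $\Vert f_\varepsilon(\tau,\cdot)\Vert_{L^2}\leq(\Vert a_\varepsilon-a\Vert_{L^\infty}+\Vert q_\varepsilon-q\Vert_{L^\infty})\Vert u(\tau,\cdot)\Vert_{L^2}+\Vert b_\varepsilon-b\Vert_{L^\infty}\Vert\partial_t u(\tau,\cdot)\Vert_{L^2}$.

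Finally I would pass to the limit $\varepsilon\to0$. By \eqref{approx.condition3} the quantities $\Vert a_\varepsilon\Vert_{L^\infty}$, $\Vert b_\varepsilon\Vert_{L^\infty}$, $\Vert q_\varepsilon\Vert_{L^\infty}$ stay bounded, so $K_\varepsilon$ is bounded; the norms $\Vert u(\tau,\cdot)\Vert_{L^2}$ and $\Vert\partial_t u(\tau,\cdot)\Vert_{L^2}$ are bounded on $[0,T]$ because $u\in C([0,T];W^1_{\mathcal{L}}(0,1))\cap C^1([0,T];L^2(0,1))$; the differences $\Vert a_\varepsilon-a\Vert_{L^\infty}$, $\Vert b_\varepsilon-b\Vert_{L^\infty}$, $\Vert q_\varepsilon-q\Vert_{L^\infty}$ tend to $0$ by \eqref{approx.condition3}; and, since $u_0,u_0'',u_1\in L^2(0,1)$, the approximation property of Friedrichs mollifiers gives $\Vert u_{0,\varepsilon}-u_0\Vert_{L^2}\to0$, $\Vert u_{0,\varepsilon}''-u_0''\Vert_{L^2}=\Vert(u_0'')\ast\psi_\varepsilon-u_0''\Vert_{L^2}\to0$ and $\Vert u_{1,\varepsilon}-u_1\Vert_{L^2}\to0$. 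Hence the bracket in the displayed estimate tends to $0$, so $\Vert U_\varepsilon(t,\cdot)\Vert_{L^2}\to0$ uniformly in $t\in[0,T]$, which is the assertion of the theorem.

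The first two steps are routine: they amount to the $s=1$ specialisation of the computation behind Theorem~\ref{Thm2 uniqueness}, in which the comparison is made against the classical solution $u$ (in place of a second very weak solution) and the conclusion of negligibility is weakened to convergence to $0$. The only point requiring a little care, and the reason no explicit convergence hypothesis on $(u_{0,\varepsilon})_\varepsilon$ and $(u_{1,\varepsilon})_\varepsilon$ has to be imposed, is that the $L^2$ bound for $W_\varepsilon$ coming from Corollary~\ref{cor2} involves the \emph{second derivative} of its initial datum, so one genuinely needs $u_{0,\varepsilon}''\to u_0''$ in $L^2(0,1)$; this is precisely where the hypothesis $u_0''\in L^2(0,1)$ is used. (The mollifications being taken after extending the data by zero to $\mathbb{R}$, the $u_{0,\varepsilon}$ need not satisfy the boundary conditions exactly, but this is immaterial since the estimates of Corollary~\ref{cor1} and Corollary~\ref{cor2} use only $L^2$ and $H^2$ norms of the data.)
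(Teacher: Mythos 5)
Your argument is essentially the paper's own proof: the paper likewise sets $U_\varepsilon=u_\varepsilon-u$, collects the coefficient differences (including $q_\varepsilon-q$) into the source $f_\varepsilon$, applies Duhamel's principle together with the $s=1$ energy estimates of Corollary~\ref{cor2}, and passes to the limit using \eqref{approx.condition3} and the convergence of the regularised data in $L^2$ and of $u_{0,\varepsilon}''$ in $L^2$. The only point where you go slightly beyond the paper is the closing remark identifying $u_{0,\varepsilon}''$ with $(u_0'')\ast\psi_\varepsilon$ for the zero-extension (which strictly speaking needs the data to be compatible at the endpoints, or the convergence of the regularising family to be taken as part of the definition, as the paper implicitly does); this does not affect the correctness of the main estimate or the conclusion.
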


\begin{proof}
    For $u$ being the classical solution to \eqref{Equation} and $(u_\varepsilon)_\varepsilon$ the very weak solution, by reasoning as in Theorem \ref{Thm Consistency 1}, we obtain
    \begin{align*}
        \Vert U_{\varepsilon}(t,\cdot)\Vert_{L^2} \lesssim & \Big(1+\Vert a_{\varepsilon}\Vert_{L^{\infty}}\Big)\Big(1 + \Vert b_{\varepsilon}\Vert_{L^{\infty}}\Big)\Big(1 + \Vert b_{\varepsilon}\Vert_{L^{\infty}}\Big)\bigg[\Vert u_{0,\varepsilon}-u_{0}\Vert_{L^2}  + \Vert u''_{0,\varepsilon}-u''_{0}\Vert_{L^2}\\
        & +\Vert u_{1,\varepsilon}-u_{1}\Vert_{L^2} + \Big(\Vert a_{\varepsilon}-a\Vert_{L^{\infty}} + \Vert q_{\varepsilon}-q\Vert_{L^{\infty}}\Big)\int_0^T \Vert u(\tau,\cdot)\Vert_{L^2 }\d \tau\\
        & + \Vert b_{\varepsilon}-b\Vert_{L^{\infty}}\int_0^T \Vert\partial_{t}u(\tau,\cdot)\Vert_{L^2}\d \tau\bigg].
    \end{align*}
    It follows from assumptions and that
    \begin{equation*}
        \Vert u_{0,\varepsilon}-u_{0}\Vert_{L^2} \rightarrow 0,\quad \Vert u''_{0,\varepsilon}-u''_{0}\Vert_{L^2} \rightarrow 0\quad \text{and} \quad\Vert u_{1,\varepsilon}-u_{1}\Vert_{L^2} \rightarrow 0 \text{~as~} \varepsilon \rightarrow 0,
    \end{equation*}
    and $u$ being the classical solution to \eqref{Equation} that
    \begin{equation*}
        \Vert U_{\varepsilon}(t,\cdot)\Vert_{L^2} \rightarrow 0,\quad \text{as } \varepsilon\rightarrow 0,
    \end{equation*}
    uniformly in $t\in [0,T]$, ending the proof.    
\end{proof}

\end{document}